\tikzset{join/.code=\tikzset{after node path={%
\ifx\tikzchainprevious\pgfutil@empty\else(\tikzchainprevious)%
edge[every join]#1(\tikzchaincurrent)\fi}}}
\tikzset{>=stealth',every on chain/.append style={join},
         every join/.style={->}}
\tikzstyle{labeled}=[execute at begin node=$\scriptstyle,
\newtheorem{example}{\textit{Example}}[section]
\title{Lasserre hierarchy for large scale polynomial\\ optimization in real and complex variables}
\author{C\'edric Josz\footnotemark[2]\ \footnotemark[4]\
\and Daniel~K. Molzahn\footnotemark[3]
}
\begin{document}
\maketitle

\renewcommand{\thefootnote}{\fnsymbol{footnote}}

\footnotetext[2]{French National Research Institute in Scientific Computing INRIA, Paris-Rocquencourt, BP 105, F-78153 Le Chesnay, France.}
\footnotetext[3]{Department of Electrical Engineering and Computer Science, University of Michigan, Ann Arbor, MI 48109, USA (\email{molzahn@umich.edu}). Support from Dow Sustainability Fellowship, ARPA-E grant DE-AR0000232 and Los Alamos National Laboratory subcontract 270958.}
\footnotetext[4]{French Transmission System Operator RTE, 9, rue de la Porte de Buc, BP 561, F-78000 Versailles, France (\email{cedric.josz@rte-france.com}). The research was funded by the CIFRE ANRT 
contract 2013/0179 and by the European Research Council (ERC) under the European Union's Horizon 2020 research and innovation program (grant agreement 666981 TAMING).}

\renewcommand{\thefootnote}{\arabic{footnote}}

\slugger{mms}{xxxx}{xx}{x}{x--x}

\begin{abstract}
We propose general notions to deal with large scale polynomial optimization problems and demonstrate their efficiency on a key industrial problem of the twenty first century, namely the optimal power flow problem. These notions enable us to find global minimizers on instances with up to 4,500 variables and 14,500 constraints.
First, we generalize the Lasserre hierarchy from real to complex to numbers in order to enhance its tractability when dealing with complex polynomial optimization. Complex numbers are typically used to represent oscillatory phenomena, which are omnipresent in physical systems. 
Using the notion of hyponormality in operator theory, we provide a finite convergence criterion which generalizes the Curto-Fialkow conditions of the real Lasserre hierarchy. Second, we introduce the multi-ordered Lasserre hierarchy in order to exploit sparsity in polynomial optimization problems (in real or complex variables) while preserving global convergence. It is based on two ideas: 1) to use a different relaxation order for each constraint, and 2) to iteratively seek a closest measure to the truncated moment data until a measure matches the truncated data. Third and last, we exhibit a block diagonal structure of the Lasserre hierarchy in the presence of commonly encountered symmetries.


\end{abstract}

\begin{keywords}
Multi-ordered Lasserre hierarchy,
Hermitian sum-of-squares,
chordal sparsity,
semidefinite programming,
optimal power flow.
\end{keywords}

\begin{AMS}90C22, 90C06, 90C26, 28A99, 14Q99, 47N10.\end{AMS}

\pagestyle{myheadings}
\thispagestyle{plain}
\markboth{C\'EDRIC JOSZ AND DANIEL~K. MOLZAHN}{Large Scale Polynomial Optimization}
\section{Introduction}

Polynomial optimization encompasses NP-hard non-convex problems that arise in various applications and it includes, as special cases, integer programming and quadratically-constrained quadratic programming. The Lasserre hierarchy~\cite{lasserre-2001,parrilo-2000b,parrilo-2003}, which draws on algebraic geometry~\cite{putinar-1993}, enables one to solve such problems to global optimality using semidefinite programming. A big challenge today is to make it applicable to large scale real world problems. Recent approaches in this direction include the use of chordal sparsity \cite{waki-2006}, the BSOS hierarchy \cite{toh-2017} and Sparse-BSOS hierarchy \cite{weisser-2017}, the DSOS and SDSOS hierarchies \cite{ahmadi-2014,kuang-2017bis,josz2017}, and ADMM for sum-of-squares \cite{zheng2017}. The Lasserre hierarchy has two dual facets, moments and sums-of-squares, and most approaches to reduce the computational burden can be viewed as a restriction on the sum-of-squares: \cite{waki-2006} restricts the number of variables, \cite{toh-2017} restricts the degree, and \cite{ahmadi-2014} restricts the number of terms inside the square. Following this line of research, we propose to restrict sum-of-squares to \textit{Hermitian} sum-of-squares \cite{angelo-2008} for optimization problems with oscillatory phenomena (e.g. power systems\cite{carpentier-1962,lavaei-low-2012,bai-fujisawa-wang-wei-2008},  imaging science~\cite{singer-2011,candes-2013,bandeira-2014}, signal processing~\cite{maricic-2003,aittomaki-2009,chen-2009,luo-2010}, automatic control~\cite{toker-1998}, and quantum mechanics~\cite{hilling-2010}). In addition, we propose to restrain the use of high degree sum-of-squares to only some constraints by using a different degree for each constraint. Finally, we show that if the polynomials defining the objective and the constraints are even (i.e. all the monomials have an even degree), then we can restrict the sum-of-squares to be even at no loss of bound quality. We show that a similar result holds for Hermitian sum-of-squares. The relevance of the restrictions to sum-of-squares that we propose is demonstrated on the optimal power flow problem in electrical engineering.

The optimal power flow is a central problem in power systems introduced half a century ago in~\cite{carpentier-1962}. 
It seeks to find a steady state operating point of an alternating current transmission network that respects Kirchhoff's laws, Ohm's law, and power balance equations. In addition, the point has to be optimal under a criterion such as total power generation or generation costs. It must also satisfy operational constraints which include narrow voltage ranges around nominal values and line ratings to keep Joule heating to acceptable levels. 
While many non-linear methods~\cite{murillosanchez-thomas-zimmerman-2011,castillo-2013} have been developed to solve this difficult problem, there is a strong motivation for producing more reliable tools. 
First, power systems are growing in complexity due to the increase in the share of renewables, the increase in the peak load, and the expected wider use of demand response and storage. Second, new tools are needed to profit from high-performance computing and advanced telecommunications (phasor measurement units, dynamic line ratings, etc.). Finally, the ultimate goal is to solve large problems 
(e.g. 10,000 buses in the synchronous grid of Continental Europe) 
with combinatorial complexity due to phase-shifting transformers, high-voltage direct current, and special protection schemes. Solving the continuous case (i.e., optimal power flow) to global optimality would be of great benefit to that end.
Since 2006, semidefinite and second-order conic relaxations have been proposed~\cite{jabr2006, low_tutorial, andersen2014, coffrin2015, taylor2015, dan2015}. It has emerged that 
the only approach that systematically yields global minimizers is the Lasserre hierarchy \cite{cedric_tps,pscc2014,ibm_paper}, although so far only for medium sized problems \cite{mh_sparse_msdp}. We solve large scale instances within minutes thanks to the restrictions of sum-of-squares discussed above.

This paper is organized as follows. Section \ref{sec:Complex Lasserre hierarchy} generalizes the Lasserre hierarchy to complex numbers to deal with complex polynomial optimization. Asymptotic convergence is discussed in Section \ref{sec:Asymptotic convergence}, while finite convergence is studied in Sections \ref{sec:Finite convergence} and \ref{sec:Truncated moment problem}. Sparsity is exploited in real and complex numbers via the multi-ordered Lasserre hierarchy in Section \ref{sec:Multi-ordered Lasserre hierarchy}, and symmetry is exploited via the block diagonal Lasserre hierarchy in Section \ref{sec:Block diagonal Lasserre hierarchy}. Finally, Section \ref{sec:Conclusion} concludes our work.

%
%

\section{Complex Lasserre hierarchy} 
\label{sec:Complex Lasserre hierarchy}
Consider the problem of finding global solutions to a complex polynomial optimization problem
\begin{equation}
\begin{array}{ll}
\inf\limits_{z \in \mathbb{C}^n} ~ & f(z,\bar{z}) ~:= \sum\limits_{\alpha,\beta} f_{\alpha,\beta} z^\alpha \bar{z}^\beta \\[1em]
\mathrm{s.t.} & g_i(z,\bar{z}) := \sum\limits_{\alpha,\beta} g_{i,\alpha,\beta} z^\alpha \bar{z}^\beta \geqslant 0, \quad i=1,\ldots,m.
\end{array}
\end{equation}
We use the multi-index notation $z^\alpha := z_1^{\alpha_1} \cdots z_n^{\alpha_n}$ for $z \in {\mathbb C}^n$,
$\alpha \in {\mathbb N}^n$, and $\bar{z}$ stands for the conjugate of $z$. As usual, $\mathbb{C}$ denotes the set of complex numbers (with $\textbf{i}$ the imaginary number) and $\mathbb{R}$ denotes the set of real numbers. The functions $f, g_1, \ldots, g_m$ are real-valued polynomials so that in the above sums only a finite number of coefficients $f_{\alpha,\beta}$ and $g_{i,\alpha,\beta}$ are nonzero and they satisfy $\overline{f_{\alpha,\beta}} = f_{\beta,\alpha}$ and $\overline{g_{i,\alpha,\beta}} = g_{i,\beta,\alpha}$. The feasible set is defined as $K:=\{z \in {\mathbb C}^n \: :\: g_i(z,\bar{z}) \geqslant 0, \: i=1,\ldots,m\}$.

\begin{example}
\normalfont
\textit{The optimal power flow problem is a complex polynomial optimization problem. It reads:}
$$
\inf\limits_{ z \in \mathbb{C}^n } ~~~~~ \sum\limits_{i=1}^n ~~  C_{i2} \left( \sum\limits_{j=1}^n \frac{\overline{Y}_{ij}}{2} z_i \overline{z}_j + \frac{Y_{ij}}{2} z_j \overline{z}_i \right)^2 ~+~ C_{i1}\left( \sum\limits_{j=1}^n \frac{\overline{Y}_{ij}}{2} z_i \overline{z}_j + \frac{Y_{ij}}{2} z_j \overline{z}_i\right) ~+~ C_{i0} 
$$
$$
\text{s.t.} ~~~
\left\{
\begin{array}{c}
(V_i^{\text{min}})^2 \leqslant |z_i|^2 \leqslant (V_i^{\text{max}})^2, ~~~ i=1,\hdots,n  \\[.2cm]
P_i^{\text{min}} - P^\text{dem}_i \leqslant ~  \sum\limits_{j=1}^n \frac{\overline{Y}_{ij}}{2} z_i \overline{z}_j + \frac{Y_{ij}}{2} z_j \overline{z}_i ~ \leqslant P_i^{\text{max}} - P^\text{dem}_i , ~~~ i=1,\hdots,n  \\[.4cm]
Q_i^{\text{min}} - Q^\text{dem}_i \leqslant ~ \sum\limits_{j=1}^n \frac{\overline{Y}_{ij}}{2\textbf{i}} z_i \overline{z}_j - \frac{Y_{ij}}{2\textbf{i}} z_j \overline{z}_i ~ \leqslant Q_i^{\text{max}} - Q^\text{dem}_i, ~~~ i=1,\hdots,n \\[.4cm]
\left| B_{ij} z_i \bar{z}_i + Y_{ij} z_j \bar{z}_i \right|^2  \leqslant (S_{ij}^{\text{max}})^2 , ~~~ i,j=1,\hdots,n,~\text{when}~ Y_{ij} \neq 0
\end{array}
\right.
$$
\textit{where all symbols in capital letters are physical constants.}
\textit{Figure} \ref{fig:opf} \textit{illustrates a global solution on an instance with} 14 \textit{complex variables} \cite[IEEE 14 Bus]{ieee_test_cases}\textit{. Each variable corresponds to a node in the graph and represents the voltage at that node. Once the voltages are computed, one can deduce the power production at each generator and the power flows on the edges. In order to supply} 261 MW \textit{of power to consumers (in red), the least expensive generation plan entails a total power production of} 268 MW \textit{(in black). The global solution was computed with the first order relaxation of the real Lasserre hierarchy (after converting the problem to real numbers). This is in accordance with the seminal work of Lavaei and Low} \cite{lavaei-low-2012} \textit{who showed that the first order relaxation solves many instances of the optimal power flow problem. It was later shown that there are also many instances that need higher-order relaxations} \cite{borden-demarco-lesieutre-molzahn-2011}\textit{. Such instances can be found in Table} \ref{tab:results}.
\begin{figure}[!h]
	\centering
	\includegraphics[width=.9\textwidth]{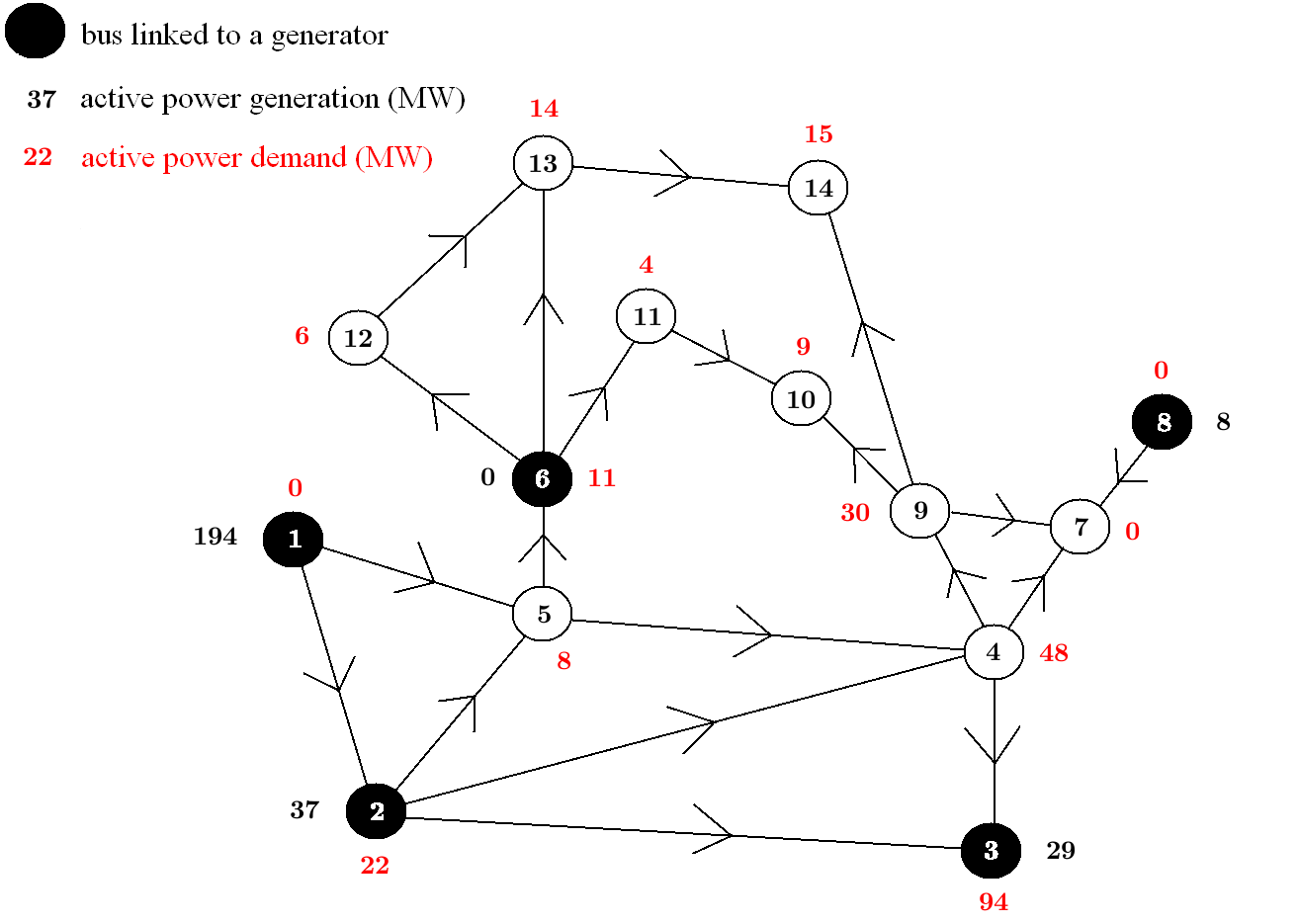}
	\caption{Globally optimal power flow in the Midwestern United States}
	 \label{fig:opf}
\end{figure}
\end{example}

In order to solve complex polynomial optimization problems, we follow the point of view of Lasserre \cite{lasserre-2000,lasserre-2001} in real numbers, that is the reformulation
\begin{equation}
\inf_{\mu \in \mathcal{M}_+(K) } \int_K f d\mu ~~~ \text{subject to} ~~~ \int_K d\mu = 1 
\end{equation}
where $\mathcal{M}_+(K)$ denotes the set of finite positive Borel measures on $K$. Lasserre observes that if the objective and constraints are real polynomials, then one may invoke the real moments $\int_K x^{\alpha} d\mu, ~ \alpha \in \mathbb{N}^n,$ of the measure $\mu$. We remark that with complex polynomials, this leads instead to the complex moments of the measure $\mu$, that is 
\begin{equation}
\int_K z^\alpha \bar{z}^\beta d\mu ~,~~~ \forall \alpha,\beta \in \mathbb{N}^n.
\end{equation}
Complex moments, like real moments, characterize the measure when $K$ is compact, thanks to the Stone-Weiestrass theorem. Note that when $K$ is compact, Borel measures are referred to as Radon measures and identify with the topological dual of the continuous functions from $K$ to $\mathbb{R}$ equipped with the operator norm. This is due to the Riesz representation theorem (see standard textbooks, e.g. \cite{rudin-1987}).
  
In order to define the original Lasserre hierarchy, the sequence of moments is truncated $\left\{ \int_K x^{\alpha} d\mu , ~  |\alpha| \leqslant 2d \right\}$ where $d$ is the truncation order and $|\alpha| := \sum_{k=1}^n \alpha_k$. In order to define the complex Lasserre hierarchy, we suggest truncating as follows: $\left\{ \int_K z^\alpha \bar{z}^\beta d\mu,~ |\alpha|, |\beta| \leqslant d \right\}$.
This naturally leads to a moment/sum-of-squares hierarchy in complex numbers:
$$
\boxed{
\begin{array}{lcl}
\inf_y ~ L_y(f) & \text{s.t.}& y_{0,0} = 1,~~  M_d(y) \succcurlyeq 0, ~~\text{and}~~ M_{d-k_i}(g_iy) \succcurlyeq 0, ~ i = 1, \hdots, m \\\\
\sup_{\lambda, \sigma} ~ \lambda & \text{s.t.} & f - \lambda = \sigma_0 + \sigma_1 g_1 + \hdots + \sigma_m g_m
\end{array}}
$$
where $\succcurlyeq$ stand for positive semidefinite. It relies on the following key notions:\\
\begin{itemize}
\item The \textit{complex moment matrix} is a Hermitian matrix defined by 
\begin{equation} 
M_d(y) := (y_{\alpha,\beta})_{|\alpha|,|\beta|\leqslant d}
\end{equation}
In contrast to the real moment matrix (in the original Lasserre hierarchy), it is \textit{not} a Hankel matrix. In other words, $y_{\alpha,\beta}$ is not necessarily only a function of $\alpha+\beta$. In the real case, we have $x^{\alpha} x^{\beta} = x^{\alpha+\beta}$ for $x\in \mathbb{R}^n$, whereas in the complex case, no such relationship holds for $z^\alpha \bar{z}^\beta$ where $z\in \mathbb{C}^n$.\footnote{In fact, if one were to enforce the Hankel property in the complex moment matrix, one obtains the real Lasserre hierarchy applied to the complex polynomial optimization problem where all the complex variables are restrained to the real line. But make no confusion: this is \textit{not} the real polynomial optimization problem obtained by identifying real and imaginary parts of the complex variables.}\\
\item The \textit{Riesz functional} is defined by
\begin{equation}
L_y(f) ~:=~ \sum_{\alpha,\beta} ~ f_{\alpha,\beta} ~  y_{\alpha,\beta}
\end{equation}
\item The \textit{localizing matrices} are defined by 
\begin{equation}
M_{d-k_i}(g_iy) := \left(\sum_{\gamma,\delta} ~ g_{i,\gamma,\delta} ~ y_{\alpha+\gamma,\beta+\delta}\right)_{|\alpha|,|\beta|\leqslant d-k_i}
\end{equation}
where $k_i := \max \{ |\alpha|,|\beta| ~\text{s.t.}~ g_{i,\alpha,\beta} \neq 0 \}$. Naturally, the truncation order $d$ must be greater than or equal to $d^{\text{min}} := \max \{k_0,k_1,\hdots,k_m\}$ where $k_0 := \max \{ |\alpha|,|\beta| ~\text{s.t.}~ f_{\alpha,\beta} \neq 0 \}$.\\
\item A polynomial $\sigma(z,\bar{z}) = \sum_{|\alpha|,|\beta|\leqslant d} \sigma_{\alpha,\beta} z^\alpha \bar{z}^\beta$ is a \textit{Hermitian sum-of-squares}, i.e. it belongs to $\Sigma_d[z,\bar{z}]$, if it is of the form\footnote{We use $|\cdot|$ to denote the modulus of a complex number. In a Hermitian sum-of-squares, the dependence on both $z$ and $\bar{z}$ can be seen upon developing the squares.}
\begin{equation}
\sigma(z,\bar{z}) ~ = ~ \sum_k \left| \sum_{|\alpha|\leqslant d} p_{k,\alpha} z^\alpha \right|^2  ~~\text{where}~~ p_{k,\alpha} \in \mathbb{C}.
\end{equation}
This is equivalent to $(\sigma_{\alpha,\beta})_{|\alpha|,|\beta|\leqslant d} \succcurlyeq 0$ where $\alpha,\beta \in \mathbb{N}^n$. In the complex Lasserre hierarchy, $\sigma_0 \in \Sigma_{d}[z,\bar{z}]$ and $\sigma_i \in \Sigma_{d-k_i}[z,\bar{z}], ~ i = 1, \hdots, m$.\\
\item A Hermitian sum-of-squares is a special case of a \textit{real sum-of-squares} (used in the original Lasserre hierarchy), that is, a polynomial of the form
\begin{equation}
\sigma(z,\bar{z}) ~ = ~ \sum_k \left| \sum_{|\alpha+\beta|\leqslant d} p_{k,\alpha,\beta} z^\alpha \bar{z}^{\beta} \right|^2  ~~\text{where}~~ p_{k,\alpha,\beta} \in \mathbb{C}.
\end{equation}
This is equivalent to the existence of a real positive semidefinite matrix $(\varphi_{\alpha,\beta})_{|\alpha|,|\beta|\leqslant d}$ where $\alpha,\beta \in \mathbb{N}^{2n}$ such that $\sigma(z,\bar{z}) = \sum_{\alpha,\beta} \varphi_{\alpha,\beta} x^{\alpha+\beta} $. (We have identified real and imaginary parts $z_k := x_k +  x_{k+n}\textbf{i}$.)\\
\end{itemize}

\begin{example}
\normalfont
$x_1^2+ 2x_1+1+x_2^2$ \textit{with} $x_1,x_2 \in \mathbb{R}$ \textit{is a Hermitian sum-of-squares because it is equal to} $|1+x_1+x_2\textbf{i}|^2 = |1+z|^2$ \textit{where} $z := x_1+ x_2\textbf{i}$\textit{. In contrast,} $x_1^2+ 2x_1 + 1$ \textit{is a real sum-of-squares but not a Hermitian sum-of-squares. Indeed, we have} $x_1^2+ 2x_1+ 1  = \left|1+\frac{1}{2}z+\frac{1}{2}\bar{z}\right|^2 = 1 + z + \bar{z} + \frac{1}{4} z^2 + \frac{1}{2}|z|^2 + \frac{1}{4} \bar{z}^2$\textit{. In other words}
\begin{equation}
x_1^2+ 2x_1+ 1 ~~ = ~
\begin{pmatrix} 
\hphantom{.}1\hphantom{^2} \\
\hphantom{.}z\hphantom{^2} \\
\hphantom{.}z^2
\end{pmatrix}^*
\begin{pmatrix}
1 & 1 & 1/4 \\
1 & 1/2 & 0 \\
1/4 & 0 & 0
\end{pmatrix}
\begin{pmatrix} 
\hphantom{.}1\hphantom{^2} \\
\hphantom{.}z\hphantom{^2} \\
\hphantom{.}z^2
\end{pmatrix}
\end{equation}
\textit{where} $(\cdot)^*$ \textit{stands for conjugate transpose. The above matrix is unique and it is not positive semidefinite. Hence the polynomial is not a Hermitian sum-of-squares. The unicity in the Hermitian decomposition} (\textit{which is true for any polynomial, not just in this example}) \textit{contrasts with the non-unicity in the real decomposition} $x_1^2+ 2x_1+ 1  = \hdots $
$$
\scriptsize
\begin{pmatrix} 
1 \\
x_1 \\
x_2 \\
x_1^2 \\
x_1 x_2 \\
x_2^2
\end{pmatrix}^T
\begin{pmatrix}
1 & 1 & 0 & 0 & 0 & 0 \\
1 & 1 & 0 & 0 & 0 & 0 \\
0 & 0 & 0 & 0 & 0 & 0 \\
0 & 0 & 0 & 0 & 0 & 0 \\
0 & 0 & 0 & 0 & 0 & 0 \\
0 & 0 & 0 & 0 & 0 & 0 
\end{pmatrix}
\begin{pmatrix} 
1 \\
x_1 \\
x_2 \\
x_1^2 \\
x_1 x_2 \\
x_2^2
\end{pmatrix}
=
\begin{pmatrix} 
1 \\
x_1 \\
x_2 \\
x_1^2 \\
x_1 x_2 \\
x_2^2
\end{pmatrix}^T
\begin{pmatrix}
1 & 1 & 0 & 1/2 & 0 & 0 \\
1 & 0 & 0 & 0 & 0 & 0 \\
0 & 0 & 0 & 0 & 0 & 0 \\
1/2 & 0 & 0 & 0 & 0 & 0 \\
0 & 0 & 0 & 0 & 0 & 0 \\
0 & 0 & 0 & 0 & 0 & 0 
\end{pmatrix}
\begin{pmatrix} 
1 \\
x_1 \\
x_2 \\
x_1^2 \\
x_1 x_2 \\
x_2^2
\end{pmatrix}
$$
\textit{where} $(\cdot)^T$ \textit{stands for transpose. One of the above matrices is positive semidefinite, making the polynomial a real sum-of-squares} (\textit{which is otherwise obvious}).
\end{example}
\vspace{.3cm}
\begin{example}
\normalfont
\textit{Consider the following complex polynomial optimization problem}
\begin{equation}
\inf_{z \in \mathbb{C}} ~~ z+\bar{z} ~~~\text{s.t.}~~~ |z|^2 = 1 
\end{equation}
\textit{whose optimal value is} $-2$\textit{. Letting} $z =: x_1 + \textbf{i} x_2$\textit{, it can converted into real numbers:}
\begin{equation}
\inf_{x_1,x_2 \in \mathbb{R}} ~~ 2x_1 ~~~\text{s.t.}~~~ x_1^2 + x_2^2 = 1.
\end{equation}
\textit{It can be solved to global optimality using real sum-of-squares since}
\begin{equation}
2x_1 ~ - ~ (-2) ~~=~~ 1 + (x_1+x_2)^2 ~~+~~ 1 \times (1-x_1^2 - x_2^2). \end{equation} 
\textit{But it can also be solved using Hermitian sum-of-squares since}
\begin{equation}
z+\bar{z} ~~ - ~ (-2) ~~=~~~~~~~ |1+z|^2 ~~~~~~+~~~~ 1 \times (1-|z|^2).~~~~~~~~~~~~~~ 
\end{equation} 
\end{example}
\indent Hermitian sum-of-squares entail a trade-off. At each truncation order, they are cheaper to compute, but they potentially provide a relaxation bound of poorer quality. More precisely, as the number of variables grows, the moment matrix in the real Lasserre hierarchy is $2^d$ times bigger than the moment matrix in the complex hierarchy. Regarding the optimal power flow problem, the relaxation bounds for the real and complex hierarchies are the same at each order in all our numerical experiments.

\begin{example}
\normalfont
\textit{The advantage of Hermitian sum-of-squares for finding global minimizers to the optimal power flow problem can be seen in} Table \ref{tab:results}\textit{. In} 17 \textit{out of the} 18 \textit{instances, they are faster, sometimes up to an order of magnitude. The minimizers obtained are feasible up to} 0.005 p.u. \textit{at voltage constraints and} 1 MVA \textit{at all other constraints}\footnote{Typical violations are smaller than 1~MVA. For instance, with the complex hierarchy PL-3012wp has over 99\% of the buses with less than 0.02~MVA violation, and only 0.09\% of the buses with greater than 0.1~MVA violation. Maximum line flow violation is 0.0006~MVA.}\textit{, and the objective evaluated in the minimizers matches the relaxation bound with} 0.05\% \textit{relative to the bound. In order to obtain these results, sparsity is exploited using the multi-ordered Lasserre hierarchy} (Section \ref{sec:Multi-ordered Lasserre hierarchy})\textit{, and symmetry is exploited using the block diagonal Lasserre hierarchy} (Section \ref{sec:Block diagonal Lasserre hierarchy})\textit{. We thus increment the relaxation order at up to} 176 \textit{constraints, and up to order} 2\textit{. The largest maximal clique size is} 19\textit{. Regarding the software,} YALMIP \mbox{2015.06.26}~\cite{yalmip} \textit{and} MOSEK \textit{are used for the experiments. For test case descriptions, see}~\cite{mh_sparse_msdp} \textit{for} \mbox{case14Q}--\mbox{case300}, \cite{nesta} \textit{for the ``nesta'' cases with ``active power increases''} (API) \textit{ loading scenarios, and} \cite{murillosanchez-thomas-zimmerman-2011,josz-2016,pegase}  \textit{for} \mbox{PL-2383wp}--\mbox{PEGASE-2869}\textit{. For the Polish} (PL) \textit{and} \mbox{PEGASE} \textit{cases, a preprocessing step was used to eliminate lines with impedances less than} \mbox{$1\times 10^{-3}$} \textit{and} \mbox{$3\times 10^{-3}$} \textit{per unit, respectively, a} \mbox{$1\times 10^{-4}$} \textit{per unit minimum resistance was enforced on each line} (\textit{as in} \cite{lavaei-low-2012})\textit{, and the objective was active power loss minimization.} Table \ref{tab:results} \textit{displays the number of variables and constraints after the preprocessing step.}
\begin{table}[!ht]
\centering
\caption{Global value found by multi-ordered Lasserre hierarchy and solver time in seconds}
\begin{tabular}{|l|c|c|c|c|c|c|}
\hline
\multicolumn{1}{|c|}{\textbf{Case}} & \multicolumn{1}{c|}{\textbf{Real}} & \multicolumn{1}{c|}{\textbf{Const-}} & \multicolumn{2}{c|}{\textbf{Real Lasserre}} & \multicolumn{2}{c|}{\textbf{Complex Lass.}}\\\cline{4-7}
\multicolumn{1}{|c|}{\textbf{Name}}  & \multicolumn{1}{c|}{\textbf{Variables}} & \multicolumn{1}{c|}{\textbf{raints}} & \hspace*{1em}\textbf{Obj.}\hspace*{1em} & \textbf{Time} & \hspace*{1em}\textbf{Obj.}\hspace*{1em} & \textbf{Time} \\
\hline
case14Q & \hphantom{1,1}28 & \hphantom{11,1}57      & \hphantom{11}3,302 & \hphantom{1,11}4.7 & \hphantom{11}3,302 & \hphantom{1,11}2.6 \\
case14L & \hphantom{1,1}28 & \hphantom{11,1}97      & \hphantom{11}9,359 & \hphantom{1,11}1.9 & \hphantom{11}9,359 & \hphantom{1,11}1.5 \\
case39Q  & \hphantom{1,1}78 & \hphantom{11,}239     & \hphantom{1}11,221 & \hphantom{1,}741.7 & \hphantom{1}11,211 & \hphantom{1,1}58.7 \\
case39L  & \hphantom{1,1}78 & \hphantom{11,}239    & \hphantom{1}41,921 & \hphantom{1,11}2.3 & \hphantom{1}41,921 & \hphantom{1,11}1.4 \\
case57Q   & \hphantom{1,}114 & \hphantom{11,}192    & \hphantom{11}7,352 & \hphantom{1,11}3.4 & \hphantom{11}7,352 & \hphantom{1,11}3.3 \\
case57L   & \hphantom{1,}114 & \hphantom{11,}352    & \hphantom{1}43,984 & \hphantom{1,11}1.4 & \hphantom{1}43,984 & \hphantom{1,11}1.3 \\
case118Q    & \hphantom{1,}236 & \hphantom{11,}516     & \hphantom{1}81,515 & \hphantom{1,1}15.7 & \hphantom{1}81,515 & \hphantom{1,11}3.7 \\
case118L  & \hphantom{1,}236 & \hphantom{11,}888   & 134,907 & \hphantom{1,1}10.5 & 134,907 & \hphantom{1,11}5.5 \\
case300     & \hphantom{1,}600 & \hphantom{1}1,107   & 720,040 & \hphantom{1,11}7.2 & 720,040 & \hphantom{1,11}4.1 \\
nesta\_case24       & \hphantom{1,1}48 & \hphantom{11,}526   & \hphantom{11}6,421 & \hphantom{1,}246.1 & \hphantom{11}6,421 & \hphantom{1,1}61.7\\
nesta\_case30     & \hphantom{1,1}60 & \hphantom{11,}272  & \hphantom{111,}372 & \hphantom{1,}302.7 & \hphantom{111,}372 & \hphantom{1,1}15.4 \\
nesta\_case73       & \hphantom{1,}146 & \hphantom{1}1,605   & \hphantom{1}20,125 & \hphantom{1,}506.9 & \hphantom{1}20,124 & \hphantom{1,1}52.6 \\
PL-2383wp     & 4,354 & 12,844  & \hphantom{1}24,990 & \hphantom{1,}583.4 & \hphantom{1}24,991 & \hphantom{1,1}53.9 \\
PL-2746wop    & 4,378 & 13,953  & \hphantom{1}19,210 & 2,662.4 & \hphantom{1}19,212 & \hphantom{1,}124.3 \\
PL-3012wp    & 4,584 & 14,455  & \hphantom{1}27,642 & \hphantom{1,}318.7 & \hphantom{1}27,644 & \hphantom{1,}141.0 \\ 
PL-3120sp    & 4,628 & 13,948  & \hphantom{1}21,512 & \hphantom{1,}386.6 & \hphantom{1}21,512 & \hphantom{1,}193.9 \\
PEGASE-1354 & 1,966 & \hphantom{1}6,444  & \hphantom{1}74,043 & \hphantom{1,}406.9 & \hphantom{1}74,042 & 1,132.6 \\ 
PEGASE-2869 & 4,240 & 12,804  & 133,944 & \hphantom{1,}921.3 & 133,939 & \hphantom{1,}700.8 \\ 
\hline
\end{tabular}
\label{tab:results}
\end{table}
\end{example}

Having motivated the introduction of the complex Lasserre hierarchy, we discuss asymptotic convergence and finite convergence in the next two sections. These two aspects are significantly different from the real hierarchy. Most other aspects of the real hierarchy carry over to the complex hierarchy in a straightforward fashion, including strong duality \cite{josz-2015} and the generalized Lagrangian interperation (see \cite[Section 7]{josz-phd} for details). One aspect that is unresolved is the question of generic finite convergence \cite{nie-2014}, which is a subject for future research.

\section{Asymptotic convergence}
\label{sec:Asymptotic convergence}
In 1968, Quillen~\cite{quillen-1968} showed that a real-valued bihomogeneous complex polynomial that is positive away from the origin can be decomposed as a Hermitian sum-of-squares when it is multiplied by $(|z_1|^2 + \hdots + |z_{n}|^2)^r$ for some $r\in \mathbb{N}$. The result was rediscovered by Catlin and D'Angelo~\cite{catlin-1996} and ignited a search for complex analogues of Hilbert's seventeenth problem~\cite{angelo-2002,angelo-2010} and the ensuing Positivstellens\"atze~\cite{putinar-2006,putinar-2012,putinar-scheiderer-2012,putinar-2013}. Notably, D'Angelo and Putinar proved the following powerful result in 2008.

\begin{theorem}[D'Angelo's and Putinar's Positivstellenstatz~\cite{angelo-2008}]
\label{th:angelo} Assume that one of the constraints of $K$ is a sphere $|z_1|^2 + \hdots + |z_n|^2 = R^2$ for some radius $R > 0$. If $f>0$ on $K$, then there exists Hermitian sum-of-squares $\sigma_0, \hdots, \sigma_m$ such that 
\begin{equation}
f = \sigma_0 + \sum_{i=1}^m \sigma_i g_i.
\end{equation}
\end{theorem}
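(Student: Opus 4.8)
The plan is to reduce the statement to Quillen's theorem (in the Catlin--D'Angelo form quoted above), which is the natural ``engine'' for Hermitian sum-of-squares positivity, and to use the sphere constraint both to gain compactness and to clear denominators at the end. After rescaling we may assume $R=1$, so that one of the constraints is $\|z\|^2 := |z_1|^2+\cdots+|z_n|^2 = 1$; in particular $K$ is compact.

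First I would homogenize. Since Quillen's theorem applies only to \emph{bihomogeneous} forms that are positive away from the origin, the first step is to make $f$ and the $g_i$ bihomogeneous of a common bidegree $(N,N)$. A subtlety is that multiplication by powers of $\|z\|^2$ cannot balance the $z$-degree against the $\bar z$-degree of an individual monomial (it raises both by the same amount), so a polynomial such as $z_1+\bar z_1$ can never be made bihomogeneous inside $\mathbb{C}^n$. I would therefore introduce one auxiliary variable $w$ and set
\[
F(z,w,\bar z,\bar w) \;:=\; \sum_{\alpha,\beta} f_{\alpha,\beta}\, z^\alpha \bar z^\beta\, w^{\,N-|\alpha|}\,\bar w^{\,N-|\beta|},
\]
and similarly $G_i$, where $N$ dominates all degrees occurring. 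Each such form is bihomogeneous of bidegree $(N,N)$ in $(z,w)\in\mathbb{C}^{n+1}$, is again real-valued (using $\overline{f_{\alpha,\beta}}=f_{\beta,\alpha}$), is invariant under the circle action $w\mapsto e^{\textbf{i}\theta}w$, and satisfies $F(z,1,\bar z,1)=f$ and $G_i(z,1,\bar z,1)=g_i$. The equality $\|z\|^2=1$ homogenizes to $\|z\|^2=|w|^2$.

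Next I would transfer the hypothesis $f>0$ on $K$ to a positivity statement for $F$ on the homogenized constraint set and feed it into Quillen's theorem together with the constraints. Concretely, one wants a representation $(\|z\|^2+|w|^2)^{r} F = \Sigma_0 + \sum_i \Sigma_i G_i$ modulo the ideal generated by the homogenized sphere $(\|z\|^2-|w|^2)$, with each $\Sigma_j$ a Hermitian sum-of-squares; Quillen's theorem supplies the unconstrained positive-definite multiplier $(\|z\|^2+|w|^2)^r$, and the inequality constraints are incorporated either by an induction on $m$ or by exhibiting a single strictly positive bihomogeneous form that dominates. Finally I would dehomogenize by setting $w=1$: writing $(\|z\|^2+1)^r = ((\|z\|^2-1)+2)^r = 2^r + (\|z\|^2-1)\,(\cdots)$ and using the sphere relation $\|z\|^2-1=0$ to absorb all surplus powers of $\|z\|^2$ (which reduce to the constant $2^r$ on the sphere), I would arrive at $f=\sigma_0+\sum_i \sigma_i g_i$ after splitting the leftover multiple of $(\|z\|^2-1)$ between the two inequalities $1-\|z\|^2\geqslant 0$ and $\|z\|^2-1\geqslant 0$.

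The hard part, I expect, is exactly this passage from Quillen's \emph{unconstrained} positivity result to the \emph{quadratic-module} representation that is linear in the $g_i$ with Hermitian-sum-of-squares coefficients: producing the multipliers $\sigma_i$, and, in the dehomogenization, converting the arbitrary polynomial multiplier of the sphere equation into admissible Hermitian-sum-of-squares multipliers of the two sphere inequalities. This is precisely where the Archimedean character contributed by the sphere must be used in an essential way, and it is the genuinely new content beyond Quillen; a purely degree-bookkeeping argument will not suffice.
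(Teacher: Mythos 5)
First, a point of reference: the paper does not prove this statement at all --- it is quoted verbatim from D'Angelo and Putinar \cite{angelo-2008} --- so your outline must be judged on its own as a proof, and it is not one. The gap is exactly where you place it yourself, which means the proposal reduces the theorem to itself. Two concrete failures. (i) Quillen's theorem never becomes applicable: it requires the bihomogeneous form to be positive \emph{away from the origin}, i.e., on the whole unit sphere of $\mathbb{C}^{n+1}$, whereas your homogenized $F$ inherits positivity only on the homogenization of $K$, the proper subset of that sphere cut out by $G_i \geqslant 0$. So there is no unconstrained positivity statement to feed into the ``engine,'' even to get started. (ii) The construction of the multipliers $\Sigma_i$ (equivalently $\sigma_i$) is not a dehomogenization detail or degree bookkeeping; it is the entire content of the theorem, and ``an induction on $m$'' or ``a single dominating positive form'' is a placeholder, not an argument.

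Moreover, your closing diagnosis --- that the missing step is ``where the Archimedean character contributed by the sphere must be used'' --- is provably the wrong ingredient. The ball constraint $R^2 - |z_1|^2 - \cdots - |z_n|^2 \geqslant 0$ yields a Hermitian quadratic module with the same compactness and the same Archimedean-type bounds, and yet the conclusion of the theorem is \emph{false} for the ball: Example \ref{ex:angelo} of this very paper, taken from the same source \cite{angelo-2008}, exhibits $f>0$ on the closed unit disc admitting no representation $\sigma_0 + \sigma_1(1-|z|^2)$ with Hermitian sums of squares. Hence any correct completion must exploit the \emph{equality} structure of the sphere, not merely its compactness or Archimedean character. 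That is what the actual proof does: it is functional-analytic rather than algebraic --- a separation/GNS argument in which the sphere relation forces the multiplication tuple to be a spherical isometry, hence subnormal by Athavale's theorem \cite{athavale1990}, hence possessing a representing measure supported on $K$, which contradicts $f>0$ on $K$ if the representation fails. (This operator-theoretic mechanism, not algebra, is also what the present paper adapts for its own Theorem \ref{th:hypo}.) A purely algebraic reduction to Quillen of the kind you sketch is not known, and your outline gives no reason to believe one exists.
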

This theorem naturally admits a dual perspective.
\begin{theorem}[Putinar and Scheiderer~\cite{putinar-2013}]
\normalfont
\textit{If one of the constraints of} $K$ \textit{is a sphere, then the following properties are equivalent:}
\begin{enumerate}
\item $\exists \mu \in \mathcal{M}_+(K): ~\forall \alpha,\beta \in \mathbb{N}^n, ~~~ y_{\alpha,\beta} = \int_{K} z^\alpha \bar{z}^\beta d\mu$;
\item $\forall d \geqslant d^{\text{min}}~, ~~~  M_d(y) \succcurlyeq 0, ~  M_{d-k_i}(g_i y) \succcurlyeq 0$.
\end{enumerate}
\end{theorem}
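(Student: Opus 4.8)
The plan is to prove the two implications separately. The forward direction $(1)\Rightarrow(2)$ is routine, while the reverse direction $(2)\Rightarrow(1)$ is the heart of the matter and is exactly where the sphere hypothesis is used, both to invoke Theorem~\ref{th:angelo} and to secure compactness of $K$.

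For $(1)\Rightarrow(2)$, I would argue by direct computation. Fix $d\geqslant d^{\text{min}}$ and a coefficient vector $(p_\alpha)_{|\alpha|\leqslant d}$, and set $q(z):=\sum_{|\alpha|\leqslant d}\overline{p_\alpha}\,z^\alpha$. Substituting $y_{\alpha,\beta}=\int_K z^\alpha\bar z^\beta\,d\mu$ into the quadratic form gives
\begin{equation}
p^* M_d(y)\,p ~=~ \sum_{\alpha,\beta}\overline{p_\alpha}\,y_{\alpha,\beta}\,p_\beta ~=~ \int_K \Big|\sum_{|\alpha|\leqslant d}\overline{p_\alpha}\,z^\alpha\Big|^2 d\mu ~=~ \int_K |q(z)|^2\, d\mu ~\geqslant~ 0,
\end{equation}
and the same computation applied to the localizing matrix yields $p^* M_{d-k_i}(g_i y)\,p=\int_K g_i(z,\bar z)\,|q(z)|^2\,d\mu\geqslant 0$, since $g_i\geqslant 0$ on $K=\mathrm{supp}\,\mu$. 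As $d$ was arbitrary, all the positive semidefiniteness conditions in~(2) hold.

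For $(2)\Rightarrow(1)$, I would work with the Riesz functional $L_y$ acting on real-valued (Hermitian-symmetric) polynomials; note $L_y$ is real-valued there because $M_d(y)$ Hermitian forces $y_{\beta,\alpha}=\overline{y_{\alpha,\beta}}$. The matrix conditions translate into functional positivity: $M_d(y)\succcurlyeq 0$ for all $d$ is equivalent to $L_y(\sigma_0)\geqslant 0$ for every $\sigma_0\in\Sigma_d[z,\bar z]$, and $M_{d-k_i}(g_i y)\succcurlyeq 0$ is equivalent to $L_y(\sigma_i g_i)\geqslant 0$ for every $\sigma_i\in\Sigma_{d-k_i}[z,\bar z]$. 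Summing, $L_y$ is nonnegative on the whole Hermitian quadratic module $\{\sigma_0+\sum_{i=1}^m\sigma_i g_i\}$, at \emph{all} degrees. The crucial step is to upgrade this to nonnegativity on \emph{every} polynomial that is nonnegative on $K$. Here I would invoke Theorem~\ref{th:angelo}: because one constraint of $K$ is a sphere, any real-valued polynomial $f$ with $f>0$ on $K$ admits a representation $f=\sigma_0+\sum_i\sigma_i g_i$ with Hermitian sums-of-squares, whence $L_y(f)\geqslant 0$. For merely nonnegative $f$, I would use that the sphere constraint makes $K$ bounded, hence compact, so that $f+\varepsilon>0$ on $K$ for every $\varepsilon>0$; then $L_y(f)+\varepsilon\,y_{0,0}\geqslant 0$ and, since $y_{0,0}\geqslant 0$, letting $\varepsilon\to 0$ gives $L_y(f)\geqslant 0$ for all $f\geqslant 0$ on $K$.

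Finally, I would manufacture the measure by a Riesz representation argument on the compact set $K$. Positivity first makes $L_y$ well defined as a functional on polynomial functions restricted to $K$ (if $p=q$ on $K$ then $L_y(p-q)\geqslant 0$ and $L_y(q-p)\geqslant 0$), and the bound $\|f\|_\infty\,\mathbf{1}\pm f\geqslant 0$ on $K$ gives $|L_y(f)|\leqslant \|f\|_\infty\,y_{0,0}$, so $L_y$ is continuous. Since the real-valued polynomials separate points and contain the constants, they are dense in $C(K,\mathbb{R})$ by Stone--Weierstrass, so $L_y$ extends to a positive continuous functional on $C(K)$; the Riesz representation theorem then supplies a measure $\mu\in\mathcal{M}_+(K)$ with $L_y(f)=\int_K f\,d\mu$, and evaluating on the monomials $z^\alpha\bar z^\beta$ recovers $y_{\alpha,\beta}=\int_K z^\alpha\bar z^\beta\,d\mu$, which is~(1). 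I expect the main obstacle to be precisely the passage from module-positivity to positivity on all nonnegative polynomials: this is the only non-formal step, and it is where the sphere hypothesis is indispensable, furnishing simultaneously the Positivstellensatz certificate and the compactness needed for the representation argument.
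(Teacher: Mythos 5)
Your proposal is correct. One point of context: the paper does not actually prove this theorem --- it cites it directly from Putinar and Scheiderer \cite{putinar-2013}, presenting it only as the ``dual perspective'' of Theorem~\ref{th:angelo}. Your argument supplies exactly the proof that this framing suggests: a Haviland-type duality argument in which the matrix conditions are translated into positivity of $L_y$ on the Hermitian quadratic module, Theorem~\ref{th:angelo} upgrades this to positivity on all polynomials strictly positive on $K$, the $\varepsilon$-shift handles nonnegative polynomials, and compactness of $K$ (forced by the sphere constraint) permits the Stone--Weierstrass/Riesz--Markov construction of the representing measure. All the steps check out, including the two places that require care: well-definedness of $L_y$ as a functional on polynomial \emph{functions} on $K$ (handled by your two-sided positivity argument), and the recovery of the complex-valued moments $y_{\alpha,\beta}$ from a functional defined on real-valued polynomials, which works because Hermitianity of $M_d(y)$ gives $y_{\beta,\alpha}=\overline{y_{\alpha,\beta}}$, so one can evaluate separately on the real-valued polynomials $\tfrac{1}{2}\left(z^\alpha\bar z^\beta + z^\beta\bar z^\alpha\right)$ and $\tfrac{1}{2\mathbf{i}}\left(z^\alpha\bar z^\beta - z^\beta\bar z^\alpha\right)$. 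The only cosmetic remark is that compactness is not needed for the step $f+\varepsilon>0$ on $K$ (that holds for any $K$ once $f\geqslant 0$ on $K$); it is needed for the sup-norm bound, the density of polynomials in $C(K)$, and the Riesz representation itself, as you correctly use later.
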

Global convergence in the complex hierarchy is thus guaranteed in the presence of a sphere constraint. This is in contrast to the real hierarchy where it is guaranteed in the presence of a ball constraint. A sphere may appear more restrictive than a ball. However, this is sufficient to solve complex polynomial optimization problems with compact feasible sets. Indeed, one can add a slack variable $z_{n+1} \in \mathbb{C}$ and a redundant constraint $|z_1|^2 + \hdots + |z_{n+1}|^2 = R^2$ to the description of the feasible set when it is in a ball of radius $R$. This is similar to Lasserre who proposes to add a redundant ball constraint $x_1^2 + \hdots + x_n^2 \leqslant R^2$.

\begin{example}
\label{ex:angelo}
\normalfont
\textit{Consider the following optimization problem} 
\begin{equation}
 \inf_{z \in \mathbb{C}} ~~~ 1-\frac{4}{3}|z|^2+\frac{7}{18}|z^2|^2 ~~~~~\text{s.t.}~~~~~  1 - |z|^2 \geqslant 0
\end{equation}
\textit{whose optimal value is} $1/18$\textit{. D'Angelo and Putinar}~\cite{angelo-2008} \textit{have demonstrated that there does not exist Hermitian sum-of-squares} $\sigma_0$ \textit{and} $\sigma_1$ \textit{such that}
\begin{equation}
1-\frac{4}{3}|z|^2+\frac{7}{18}|z|^4 ~ = ~ \sigma_0(z,\bar{z}) + \sigma_1(z,\bar{z}) ( 1 - |z|^2 ).
\end{equation}
\textit{As a result, the complex hierarchy cannot exceed the value} 0\textit{. In fact, it finds} $-1/3$ \textit{at all orders because}
\begin{equation}
M(y) = 
\begin{array}{cccccc}
               & \hphantom{^2}1\hphantom{^2} & \bar{z} & \hphantom{^2}\bar{z}^2 & \hphantom{^2}\bar{z}^3 &    \\[.1cm]
1\hphantom{^2}             & 1 & 0 & 0    & 0     & \hdots \\[.1cm]
z\hphantom{^2}     & 0 & 1 & 0    & 0    &    \\[.1cm]
z^2 & 0 & 0 & 0    & 0    &  \\[.1cm]
z^3 & 0 & 0 & 0    & 0    &    \\[.1cm]
       & \vdots &    &       &        & \ddots
\end{array}
\end{equation}
\textit{is a primal optimal point. We propose to add a complex slack variable}
\begin{equation}
 \inf_{z_1,z_2 \in \mathbb{C}} ~~~ 1-\frac{4}{3}|z_1|^2+\frac{7}{18}|z_1|^4 ~~~~~\text{s.t.}~~~~~  1 - |z_1|^2 - |z_2|^2 = 0,
\end{equation}
\textit{enabling the second order complex relaxation to find the global infimum}
\begin{equation}
\begin{array}{c}
1-\frac{4}{3}|z_1|^2+\frac{7}{18}|z_1|^4 - \frac{1}{18} \\[.2cm]
=\\[.2cm]
 \frac{5}{18} |z_2|^2 +  \frac{5}{18} |z_1 z_2|^2 + \frac{2}{3} |z_2|^4\\[.2cm]
+\\[.2cm]
\left(\frac{17}{18} -  \frac{7}{18} |z_1|^2 + \frac{2}{3} |z_2|^2\right) (1 - |z_1|^2 - |z_2|^2).
\end{array}
\end{equation}
\textit{Note that the polynomial that multiplies the constraint is not a Hermitian sum-of-squares. This would be a contradiction when taking} $z_2 = 0$.
\end{example}

We next discuss a weaker condition ensuring global convergence in the Lasserre hierarchy.
In the real hierarchy, convergence is guaranteed if the Archimedean condition holds, that is to say, if there exists $R>0$ and real sums-of-squares $\sigma_0,\hdots,\sigma_m$ such that $
R^2-x_1^2-\hdots-x_n^2 = \sigma_0(x) + \sum_{i=1}^m \sigma_i(x) g_i(x) ,~ \forall x\in \mathbb{R}^n$. In the complex hierarchy, a similar condition can be deduced from the work of Putinar and Scheiderer~\cite[Propositions 6.6 and 3.2 (iii)]{putinar-2013}. For notational convenience, suppose that some of the inequality constraints $g_i(z,\bar{z}) \geqslant 0$ are actually equality constraints $g_i(z,\bar{z}) = 0$. Let $E \subset \{1,\hdots,m\}$ denote the indices of the equality constraints. Global convergence in the complex hierarchy is guaranteed if there exists $R>0$, a Hermitian sum-of-squares $\sigma_0$, and real-valued complex polynomials $p_i$'s such that $R^2-|z_1|^2-\hdots-|z_n|^2 = \sigma_0(z,\bar{z}) + \sum_{i\in E} p_i(z,\bar{z}) g_i(z,\bar{z}) ,~ \forall z\in \mathbb{C}^n$.
In particular, in presence of the equalities $|z_k|^2 = 1, ~ k = 1,\hdots,n$, convergence is guaranteed. In this case, there is no need to add a slack variable as suggested above. This applies for instance to the non-bipartite Grothendieck problem over the complex numbers~\cite{bandeira-2014}. Interestingly, in the optimal power flow problem, despite the absence of such equalities, global convergence is attained in all numerical experiments without adding a slack variable (as reported in Table \ref{tab:results}). Of course, it is also attained when adding a slack variable.

When the weaker assumption presented above does not hold, there exists a way to quantify how far D'Angelo's and Putinar's Positivstellensatz is from being true. This is given by the Hermitian complexity \cite{putinar-2012} of the ideal associated with the equality constraints. This number is related to the greatest number of distinct points (possibly infinite) $z^{(i)} \in \mathbb{C}^n,~ 1 \leqslant i \leqslant p$, such that $g_k(z^{(i)},z^{(j)}) = 0$ for all $k \in E$. Loosely speaking, the greater this number, the farther away the Positivstellensatz is from being true. In particular, when one of the equalities is $\sigma(z,\bar{z}) + |z_1|^2+\hdots+|z_n|^2 = R^2$ with $\sigma$ a Hermitian sum-of-squares and $R>0$, then the Hermitian complexity is equal to 1. The Positivstellensatz is then true, in accordance with the weaker assumption presented above. 

\section{Finite convergence}
\label{sec:Finite convergence}
The relaxation of order $d$ of the complex Lasserre hierarchy yields a set of complex numbers $(y_{\alpha,\beta})_{|\alpha|,|\beta| \leqslant d}$. Global solutions may be extracted if there exists a measure that represents those $y_{\alpha,\beta}$ that appear in the objective and constraint functions. In particular, this is true if there exists positive Borel measure $\mu$ supported on the semi-algebraic set $K$ such that $y_{\alpha,\beta} = \int_{\mathbb{C}^n} z^\alpha \bar{z}^\beta d\mu$ for all $|\alpha|,|\beta| \leqslant d^\text{min}$.
In this case, the complex hierarchy has finite convergence and global optimality is attained. In this section, we will show that the conditions ensuring finite convergence in the complex hierarchy differ significantly from the real hierarchy. Additional conditions need to be satisfied as shown in the next result. We will use the definition $d_K := \max \{2,k_1,\hdots,k_m\}$ where the number 2 will be explained in the next section. It is meant to guarantee that $d_K \geqslant 2$, in contrast to the real case when it is only required that $d_K \geqslant 1$ (see \cite[equation (6.1)]{laurent2009}).
\begin{proposition}
\label{prop:finite}
\normalfont
\textit{Assume that one of the constraints of the multivariate} ($n>1$) \textit{optimization problem is a ball} $|z_1|^2 + \hdots + |z_{n}|^2 \leqslant R^2$ \textit{for some radius} $R > 0$\textit{. Consider an optimal solution $y$ to the complex moment relaxation of order} 
$d$.
\vspace{.1cm}

$\bullet$ \textit{If there is an integer} $t$ \textit{such that} $d^\text{min} \leqslant t \leqslant d$ \textit{and} $\text{rank} M_{t} (y) = 1$\textit{, then global optimality is attained and there is at least one global solution.}
\vspace{.1cm}

$\bullet$ \textit{If there is an integer} $t$ \textit{such that} $\max \{ d^\text{min} , d_K \} \leqslant t \leqslant d$ \textit{and if the following conditions hold:}
\vspace{.2cm}

$
\begin{array}{l}
\text{1.} ~~ 
\text{rank} ~ M_{t} (y) = \text{rank} ~ M_{t-d_K} (y) ~ (=: S)\\[.3cm]
\text{2.} ~~ 
\begin{pmatrix} 
M_{t-d_K}(y) & M_{t-d_K}(z_i y) & M_{t-d_K}(z_j y) \\ 
M_{t-d_K}(\bar{z}_i y) & M_{t-d_K}(|z_i|^2 y)  & M_{t-d_K}(z_j \bar{z}_i y) \\
M_{t-d_K}(\bar{z}_j y) & M_{t-d_K}(z_i \bar{z}_j y) & M_{t-d_K}(|z_j|^2 y)
\end{pmatrix} 
\succcurlyeq 0 , ~ \forall 1 \leqslant i < j \leqslant n
\end{array}
$\\\\
\textit{then global optimality is attained and there are at least at least $S$ global solutions.} 
\end{proposition}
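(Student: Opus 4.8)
The plan is to adapt the Curto--Fialkow flat-extension argument to the Hermitian moment matrix in its operator-theoretic (Gelfand--Naimark--Segal) form: build a finite-dimensional Hilbert space on which the coordinate multiplications act as commuting normal operators, diagonalize them to read off an atomic representing measure supported on $K$, and then use feasibility of its atoms to force global optimality. Concretely, I would first use $M_t(y)\succcurlyeq 0$ to define a positive semidefinite sesquilinear form on analytic polynomials $p=\sum_{|\alpha|\leqslant t}p_\alpha z^\alpha$ by $\langle z^\alpha,z^\beta\rangle_y:=y_{\beta,\alpha}$, and quotient out its kernel to obtain a Hilbert space $\mathcal H$ with $\dim\mathcal H=\operatorname{rank}M_t(y)=S$. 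On $\mathcal H$ I define the multiplication operators $M_{z_k}:[p]\mapsto[z_k p]$. The flatness hypothesis (condition~1, $\operatorname{rank}M_t(y)=\operatorname{rank}M_{t-d_K}(y)$) is exactly what makes these well-defined endomorphisms: it forces the columns of degree exceeding $t-d_K$ to be recursively generated by lower-degree ones, so multiplication does not escape $\mathcal H$. The operators commute because polynomial multiplication commutes, and the ball localizing condition $M_{t-1}\!\big((R^2-\sum_k|z_k|^2)y\big)\succcurlyeq 0$ (a principal submatrix of the order-$d$ data, available since $t\leqslant d$) gives $\sum_{k}M_{z_k}^*M_{z_k}\preccurlyeq R^2 I$, so each $M_{z_k}$ is bounded by $R$.

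Next I would exploit condition~2, whose entries all sit inside the order-$t$ data because $d_K\geqslant 2$ guarantees the shifts by $e_i,e_j$ stay within degree $t$. Its $2\times 2$ diagonal sub-block for each index says $M_{z_k}^*M_{z_k}\succcurlyeq M_{z_k}M_{z_k}^*$, i.e.\ each $M_{z_k}$ is \emph{hyponormal}; since $\mathcal H$ is finite-dimensional, hyponormality upgrades to \emph{normality} (equal traces of $T^*T$ and $TT^*$ force equality). I thus obtain a commuting family of normal operators, which by the spectral theorem is simultaneously unitarily diagonalizable (Fuglede--Putnam ensures the adjoints $M_{z_k}^*$ commute as well, so the $*$-structure is consistent); the cross terms in the $3\times 3$ block guarantee that $M_{z_k}^*$ behaves as conjugate multiplication, so the joint spectral measure is carried by points $z$ together with $\bar z$ rather than an inconsistent pairing.

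Simultaneous diagonalization then produces $S$ joint eigentuples $z^{(1)},\dots,z^{(S)}\in\mathbb C^n$ with positive weights $\lambda_l$, and I set $\mu=\sum_{l=1}^S\lambda_l\,\delta_{z^{(l)}}$. I would verify $\int z^\alpha\bar z^\beta\,d\mu=y_{\alpha,\beta}$ for the moments appearing in $f$ and the $g_i$ (the analytic moments come from the eigenvalue decomposition, the mixed ones from normality), together with $\sum_l\lambda_l=y_{0,0}=1$. Feasibility of the atoms follows by evaluating each localizing inequality $M_{t-k_i}(g_i y)\succcurlyeq 0$ against the joint eigenvectors, which yields $g_i(z^{(l)},\bar z^{(l)})\geqslant 0$, while the ball bound gives $|z^{(l)}|\leqslant R$; hence every $z^{(l)}\in K$. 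Since $y$ is optimal for the relaxation, $L_y(f)$ equals the relaxation value, a lower bound on the true infimum $f^\star$; but $L_y(f)=\int f\,d\mu=\sum_l\lambda_l f(z^{(l)})\geqslant\min_l f(z^{(l)})\geqslant f^\star$ because the atoms are feasible. Equality throughout forces $f(z^{(l)})=f^\star$ for every $l$, giving finite convergence and $S$ global minimizers. The first bullet is the degenerate case $S=1$: here $\operatorname{rank}M_{t-d_K}(y)=1$ holds automatically (the $(0,0)$ entry equals $1$), $\mathcal H$ is one-dimensional, each $M_{z_k}$ is a scalar and therefore trivially normal, so neither condition~2 nor $n>1$ is needed and $\mu=\delta_{z^\star}$ pins down a single global solution.

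The \textbf{main obstacle} is the passage from the algebraic positivity in condition~2 to genuine joint normality of the \emph{well-defined} multiplication tuple on $\mathcal H$: one must check that flatness makes the $M_{z_k}$ consistent endomorphisms, that the finite-dimensional hyponormal-implies-normal step applies to precisely these operators, and, most delicately, that the resulting joint spectral measure reproduces \emph{all} mixed Hermitian moments $y_{\alpha,\beta}$ (not merely the analytic $y_{\alpha,0}$ and their conjugates) and is supported inside $K$. This is exactly the place where the complex hierarchy departs from the real one and where the operator-theoretic notion of hyponormality does the work that the Hankel structure does automatically in the real case.
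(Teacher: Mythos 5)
Your strategy is essentially the paper's: Proposition \ref{prop:finite} is obtained from a representation theorem for the pruned complex moment data (Theorem \ref{th:hypo}), proved by constructing multiplication (shift) operators on the finite-dimensional Grammian/GNS space, upgrading hyponormality to normality by the zero-trace argument in finite dimension, simultaneously diagonalizing the resulting commuting $*$-family, reading off an $S$-atomic measure, and placing its atoms in $K$ by testing the localizing matrices against the eigenvectors; your closing optimality argument and your treatment of the rank-one bullet agree with what the paper leaves implicit. Your one deviation there --- using only the diagonal $2\times 2$ blocks of condition 2 to get each $M_{z_k}$ hyponormal, hence normal, and then invoking Fuglede--Putnam to make the adjoints commute --- is a legitimate alternative to the paper's use of the full joint-hyponormality block matrix.

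However, there is one genuine gap, and it sits exactly at the point where the complex hierarchy departs from Curto--Fialkow. You claim that flatness (condition 1) ``is exactly what makes these well-defined endomorphisms: it forces the columns of degree exceeding $t-d_K$ to be recursively generated by lower-degree ones.'' For the pruned complex moment matrix this implication is \emph{false}. Take $n=1$ (the same phenomenon persists for $n>1$ by embedding), and set $y_{0,0}=y_{2,0}=y_{0,2}=y_{2,2}=1$ with all other $y_{\alpha,\beta}=0$ for $\alpha,\beta\leqslant 2$: then $M_2(y)\succcurlyeq 0$ has rank $1=\operatorname{rank}M_0(y)$, so the data is flat (and even satisfies the hyponormality condition), yet $[z]=0$ in the GNS space while $[z^2]\neq 0$, so $M_z$ is not well defined; correspondingly no representing measure exists, since $\int |z|^2 d\mu = y_{1,1} = 0$ forces $\mu=\delta_0$, contradicting $\int z^2 d\mu = y_{2,0} = 1$. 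Recursive generation from flatness is precisely the algebraic Curto--Fialkow step that the paper states it \emph{cannot} adapt to this setting. What rescues well-definedness is the ball localizing matrix: $M_{t-1}\bigl((R^2-|z_1|^2-\hdots-|z_n|^2)y\bigr)\succcurlyeq 0$ yields $\bigl\| \sum_{\alpha} w_\alpha x_{\alpha+e_k} \bigr\| \leqslant R \, \bigl\| \sum_{\alpha} w_\alpha x_\alpha \bigr\|$, so vectors of coefficients representing $0$ still represent $0$ after shifting (note that the counterexample above violates exactly this inequality). You do invoke the ball condition, but only to bound $\|M_{z_k}\|\leqslant R$ --- a vacuous statement once the operator exists on a finite-dimensional space --- rather than to define the operators in the first place. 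The fix is straightforward (use the displayed inequality for well-definedness, as the paper does), but as written your argument rests on a false lemma; repairing it also explains why the ball constraint must appear among the hypotheses of the proposition at all.
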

\begin{proof}
This is a consequence of Theorem \ref{th:hypo}.
\end{proof}
\vspace{.1cm}

Under the assumptions of Proposition \ref{prop:finite}, if the rank is equal to one, then a global solution $z$ can then be read from the moment matrix, i.e. $z = (y_{\alpha,0})_{|\alpha|=1} \in \mathbb{C}^n$. This is just like in the real Lasserre hierarchy. Otherwise, if the rank is greater than one ($S>1$), then $S$ global solutions can be extracted using \cite[Algorithm 4.1]{harmouch2017}. In fact, this algorithm can also extract global solutions from the real Lasserre hierarchy. It appears to be the most efficient way to do so as it only requires one singular value decomposition followed by an eigendecomposition. Earlier approaches can be found in \cite{henrion2005,henrion2009,laurent2009}.

\begin{example}
\label{eg:ellipse}
\normalfont
\textit{Consider the following problem whose elliptic constraint is taken from} \cite{putinar-scheiderer-2012}\textit{:}
\begin{equation}
\inf\limits_{z_1,z_2 \in \mathbb{C}} ~~~ 3 - |z_1|^2 - \frac{1}{2} \textbf{i}z_1\bar{z}_2^2 + \frac{1}{2} \textbf{i}z_2^2\bar{z}_1
\end{equation}
\begin{equation}
\text{s.t.} ~~~~~~ 
\left\{
\begin{array}{l}
|z_1|^2 - \frac{1}{4} z_1^2 - \frac{1}{4} \bar{z}_1^2 = 1 \\[.3cm]  |z_1|^2 + |z_2|^2 = 3 \\[.3cm]
\textbf{i} z_2 - \textbf{i} \overline{z}_2 = 0, ~~~ z_2 + \overline{z}_2 \geqslant 0.
\end{array}
\right.
\end{equation}
\textit{The feasible set is represented in Figure} \ref{fig:ellipse}\textit{, which we generated using} POV-Ray 3.7.0 \cite{povray2013}.
\begin{figure}[!h]
	\centering
	\includegraphics[width=.6\textwidth]{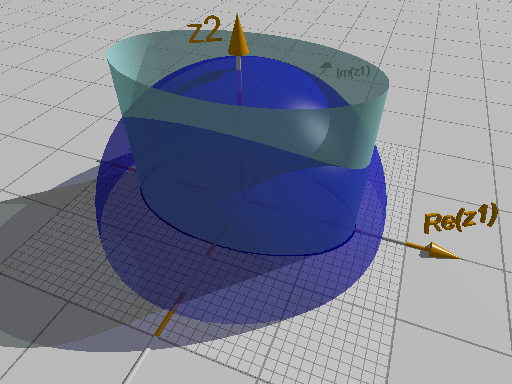}
	\caption{Feasible set at the intersection of the sphere and the elliptic cylinder}
	 \label{fig:ellipse}
\end{figure}
\textit{The hierarchy starts at the second order} ($d^{\text{min}} = d_K = 2$) \textit{which yields}\footnote{MATLAB 2015b, CVX 2.0 \cite{cvx}, and SDPT3 4.0 \cite{toh2012} are used for the numerical experiments.} \textit{the lower bound} 0.155089 \textit{and the optimal moment matrix}
$$
M_2(y) = 
\begin{array}{ccccccc}
 & 1 & \bar{z}_1 & \bar{z}_2 & \bar{z}_1^2 & \bar{z}_1 \bar{z}_2 & \bar{z}_2^2 \\[.25em]
1 & \hphantom{-}1.0000\hphantom{\textbf{i}} & \hphantom{-}0.3747\textbf{i}  & \hphantom{-}0.8485\hphantom{\textbf{i}} & \hphantom{-}1.8272\hphantom{\textbf{i}} & \hphantom{-}0.5100\textbf{i} &  \hphantom{-}1.0864\hphantom{\textbf{i}}  \\[.25em]
z_1 & -0.3747\textbf{i} &  \hphantom{-}1.9136\hphantom{\textbf{i}} & -0.5100\textbf{i}  & \hphantom{-}0.1929\textbf{i} & \hphantom{-}1.0505\hphantom{\textbf{i}}  & -0.9313\textbf{i} \\[.25em]
z_2 & \hphantom{-}0.8485\hphantom{\textbf{i}} & \hphantom{-}0.5100\textbf{i}  & \hphantom{-}1.0864\hphantom{\textbf{i}} &  \hphantom{-}0.9245\hphantom{\textbf{i}} &  \hphantom{-}0.9313\textbf{i}  & \hphantom{-}1.4950\hphantom{\textbf{i}} \\[.25em]
z_1^2 & \hphantom{-}1.8272\hphantom{\textbf{i}} & -0.1929\textbf{i} &  \hphantom{-}0.9245\hphantom{\textbf{i}} & \hphantom{-}4.5886\hphantom{\textbf{i}} & -0.1162\textbf{i} &  \hphantom{-}0.9324\hphantom{\textbf{i}} \\[.25em]
z_1 z_2 & -0.5100\textbf{i}  & \hphantom{-}1.0505\hphantom{\textbf{i}} & -0.9313\textbf{i}  & \hphantom{-}0.1162\textbf{i}  & \hphantom{-}1.1523\hphantom{\textbf{i}} & -1.4140\textbf{i} \\[.25em]
z_2^2 & \hphantom{-}1.0864\hphantom{\textbf{i}} & \hphantom{-}0.9313\textbf{i} &  \hphantom{-}1.4950\hphantom{\textbf{i}} & \hphantom{-}0.9324\hphantom{\textbf{i}} & \hphantom{-}1.4140\textbf{i}  & \hphantom{-}2.1069\hphantom{\textbf{i}}
\end{array}
$$
\textit{It holds that} $\text{rank}M_0(y) = 1$, $\text{rank}M_1(y) = 3$, \textit{and} $\text{rank}M_2(y) = 3$. \textit{Since} $\text{rank}M_0(y) \neq \text{rank}M_2(y)$\textit{, the rank condition in Proposition \ref{prop:finite} does not hold. The positive semidefine condition holds with} $t = 2$\textit{ but not with} $t=3$:
\begin{equation}
\text{sp}\left\{
\begin{pmatrix}
M_{1}(y) & M_{1}(z_1 y) & M_{1}(z_2 y) \\ 
M_{1}(\bar{z}_1 y) & M_{1}(|z_1|^2 y)  & M_{1}( z_2 \bar{z}_1 y) \\
M_{1}(\bar{z}_2 y) & M_{1}( z_1 \bar{z}_2 y) & M_{1}(|z_2|^2 y)
\end{pmatrix} 
\right\}
= 
\begin{pmatrix}
   -1.5874\\
   -0.1295\\
   -0.0000\\
    \hphantom{-}0.0000\\
    \hphantom{-}0.1574\\
    \hphantom{-}0.7711\\
    \hphantom{-}3.5471\\
    \hphantom{-}5.0544\\
    \hphantom{-}8.1869
\end{pmatrix}
\end{equation}
\textit{where} $\text{sp}\{\cdot \}$ \textit{stands for spectrum.} \textit{The third order complex relaxation yields the value} 0.428175 \textit{and the moment matrix satisfies} $\text{rank}M_3(y) = 1$\textit{. This yields a global solution} $(z_1,z_2) = (-0.8165\textbf{i} , 1.5275)$ \textit{which can be read from the third order moment matrix.} \textit{Interestingly, it is not necessary to go up to the third order relaxation. Since the positive semidefinite condition is a convex property, it can be added to the second order relaxation, with} $t=3$ \textit{for instance:}
\begin{equation}
\begin{pmatrix}
M_{1}(y) & M_{1}(z_1 y) & M_{1}(z_2 y) \\ 
M_{1}(\bar{z}_1 y) & M_{1}(|z_1|^2 y)  & M_{1}( z_2 \bar{z}_1 y) \\
M_{1}(\bar{z}_2 y) & M_{1}( z_1 \bar{z}_2 y) & M_{1}(|z_2|^2 y)
\end{pmatrix} \succcurlyeq  0.
\end{equation}
\textit{We then obtain the value} 0.428175 \textit{and the following moment matrix}
$$
\label{eq:enforce}
M_2(y) = 
\begin{array}{ccccccc}
 & 1 & \bar{z}_1 & \bar{z}_2 & \bar{z}_1^2 & \bar{z}_1 \bar{z}_2 & \bar{z}_2^2 \\[.25em]
1 &    \hphantom{-}1.0000\hphantom{\textbf{i}} & \hphantom{-}0.8165\textbf{i} &  \hphantom{-}1.5275\hphantom{\textbf{i}} & -0.6667\hphantom{\textbf{i}}    & \hphantom{-}1.2472\textbf{i} &  \hphantom{-}2.3333\hphantom{\textbf{i}} \\[.25em]
z_1 & -0.8165\textbf{i}  & \hphantom{-}0.6667\hphantom{\textbf{i}} & -1.2472\textbf{i}  & \hphantom{-}0.5443\textbf{i}   & \hphantom{-}1.0184\hphantom{\textbf{i}} & -1.9052\textbf{i}\\[.25em]
z_2 & \hphantom{-}1.5275\hphantom{\textbf{i}} & \hphantom{-}1.2472\textbf{i} &  \hphantom{-}2.3333\hphantom{\textbf{i}} & -1.0184\hphantom{\textbf{i}}  &  \hphantom{-}1.9052\textbf{i}  & \hphantom{-}3.5642\hphantom{\textbf{i}} \\[.25em]
z_1^2 & -0.6667\hphantom{\textbf{i}} & -0.5443\textbf{i} & -1.0184\hphantom{\textbf{i}} & \hphantom{-}0.4444\hphantom{\textbf{i}}    & -0.8315\textbf{i} & -1.5556\hphantom{\textbf{i}} \\[.25em]
z_1 z_2 &    -1.2472\textbf{i}  & \hphantom{-}1.0184\hphantom{\textbf{i}} & -1.9052\textbf{i}  & \hphantom{-}0.8315\textbf{i}   & \hphantom{-}1.5556\hphantom{\textbf{i}} &  -2.9102\textbf{i} \\[.25em]
z_2^2 &    \hphantom{-}2.3333\hphantom{\textbf{i}} & \hphantom{-}1.9052\textbf{i}  & \hphantom{-}3.5642\hphantom{\textbf{i}} &  -1.5556\hphantom{\textbf{i}}   & \hphantom{-}2.9102\textbf{i} &  \hphantom{-}5.4444\hphantom{\textbf{i}} 
\end{array}
$$
\textit{which satisfies} $\text{rank}M_2(y) = 1$\textit{. A global solution can be read in the first column:} $(z_1,z_2) = (-0.8165\textbf{i} , 1.5275)$.
\textit{We have just used a notion in operator theory to reduce the rank from} 3 \textit{to} 1 \textit{in a convex relaxation. For explanations, see the next section.}
\end{example}

\section{Truncated moment problem}
\label{sec:Truncated moment problem}

The complex Lasserre hierarchy brings into the picture a truncated moment problem which has not been considered in past literature to the best of our knowledge. Given a set of complex numbers $(y_{\alpha,\beta})_{|\alpha|,|\beta| \leqslant d}$, it raises the question of whether there exists positive Borel measure $\mu$ supported on the semi-algebraic set $K$ such that 
\begin{equation}
 y_{\alpha,\beta} = \int_{\mathbb{C}^n} z^\alpha \bar{z}^\beta d\mu ~, ~~~~ \text{for all} ~~ |\alpha|,|\beta| \leqslant d.
\end{equation}
In this section, we propose a solution to this problem. Precisely, we characterize when there exists a $\text{rank}M_d(y)$-atomic representing measure for the data $(y_{\alpha,\beta})_{|\alpha|,|\beta| \leqslant d}$.  We do so via the existence of an extension of the data, which must satisfy certain conditions, in the footsteps of Curto and Fialkow \cite{curto1991,curto-1996,curto-2000}. Such is how they characterize \cite[Theorem 5.1]{curto-2005} atomic measures $\mu$ supported on the semi-algebraic set $K$ such that 
\begin{equation}
 y_{\alpha,\beta} = \int_{\mathbb{C}^n} z^\alpha \bar{z}^\beta d\mu ~, ~~~~ \text{for all} ~~ |\alpha|+|\beta| \leqslant 2d
\end{equation}
given some complex numbers $(y_{\alpha,\beta})_{|\alpha|+|\beta| \leqslant 2d}$.
However, this moment problem is not relevant for the complex Lasserre hierarchy since the truncation of the data is different.
Below, we've represented the second order truncation in the complex hierarchy in blue and the second order truncation of Curto and Fialkow in black and blue:
$$
\begin{array}{cccccccc}
 & 1\hphantom{^2} & z\hphantom{^2} & \bar{z}^2 & \bar{z}^3 & \bar{z}^4 \\[.75em]
1\hphantom{^2} & \textcolor{blue}{\bullet} & \textcolor{blue}{\bullet} & \textcolor{blue}{\bullet} & \bullet & \bullet & \hdots \\[.75em]
z\hphantom{^2} & \textcolor{blue}{\bullet} & \textcolor{blue}{\bullet} & \textcolor{blue}{\bullet} & \bullet & \\[.75em]
z^2 &  \textcolor{blue}{\bullet} & \textcolor{blue}{\bullet} & \textcolor{blue}{\bullet} &  & \\[.25em]
z^3 &  \bullet & \bullet & & \ddots & \\[.75em]
z^4 &  \bullet &   &   &   & \\[.75em]
 & \vdots
\end{array}
$$
This leads to different notions of moment matrices. Below, we've represented the moment matrix of the complex hierarchy (on the left, in blue) and the moment matrix of Curto and Fialkow (on the right, in black):
$$
\begin{array}{cccc}
 & 1\hphantom{^2} & \bar{z}\hphantom{^2} & \bar{z}^2 \\[.75em]
1\hphantom{^2} & \textcolor{blue}{\bullet} & \textcolor{blue}{\bullet} & \textcolor{blue}{\bullet} \\[.75em]
z\hphantom{^2} & \textcolor{blue}{\bullet} & \textcolor{blue}{\bullet} & \textcolor{blue}{\bullet} \\[.75em]
z^2 &  \textcolor{blue}{\bullet} & \textcolor{blue}{\bullet} & \textcolor{blue}{\bullet}
\end{array}
~~~~~~~~~~~~~ 
\begin{array}{ccccccc}
 & 1\hphantom{^2} & \bar{z}\hphantom{^2} & z\hphantom{^2} & \bar{z}^2 & \bar{z}z & z^2 \\[.75em]
1\hphantom{^2} & \bullet & \bullet & \bullet & \bullet & \bullet & \bullet \\[.75em]
z\hphantom{^2} & \bullet & \bullet & \bullet & \bullet & \bullet & \bullet \\[.75em]
\bar{z}\hphantom{^2} & \bullet & \bullet & \bullet & \bullet & \bullet & \bullet \\[.75em]
z^2 & \bullet & \bullet & \bullet & \bullet & \bullet & \bullet \\[.75em]
z\bar{z} & \bullet & \bullet & \bullet & \bullet & \bullet & \bullet \\[.75em]
\bar{z}^2 & \bullet & \bullet & \bullet & \bullet & \bullet & \bullet
\end{array}
$$
The moment matrix in the complex hierarchy is referred to as \textit{pruned} complex moment matrix in \cite{lasserre2007}. However, the associated moment problem is not considered. Despite the discrepancies between the moment matrices, like Curto and Fialkow, we will rely on the notion of \textit{flat extension}, which is an extension of the moment matrix that preserves the positive semidefiniteness and the rank. 


Note that the complex moment problem of Curto and Fialkow is equivalent in some sense (see \cite[Theorem 5.2]{curto-2005}) to the real moment problem, i.e. where we seek a measure on a real semi-algebraic set such that
\begin{equation}
 y_{\alpha} = \int_{\mathbb{R}^{2n}} x^\alpha d\mu ~, ~~~~ \text{for all} ~~ |\alpha| \leqslant 2d
\end{equation}
given some real numbers $(y_{\alpha})_{|\alpha| \leqslant 2d}$. In contrast, the truncated moment problem arising in the complex hierarchy captures the real truncated moment problem as a special case. It corresponds to the case where the moment data forms a Hankel matrix (see Theorem \ref{th:special} below).

To provide a solution to the truncated moment problem arising in the complex hierarchy (Theorem \ref{th:hypo} below), we rely on the notion of hyponormality in operator theory. Indeed, we are unable to adapt the algebraic arguments used by Curto and Fialkow. They consider the ideal generated by the monomials that are indexes of the rows of their moment matrix. We are unable to make use of it in our context. 
We thus pursue a different approach, based exclusively on operator theory. The relationship between this discipline and the moment problem was recognized early on, as described by Akhiezer \cite[Chapter 4] {akhiezer-1965} in 1965. It has since been enriched by a vast literature including the works of Cassier \cite{cassier1984}, Schm\"{u}dgen \cite{schmudgen-1991}, and Putinar \cite{putinar-1993}. In particular, Atzmon \cite{atzmon-1975} used operator theory to solve the full moment problem on the unit disc in the complex plane. Later, Curto and Putinar \cite[Theorem 3.1]{curto1993} extended this result to subalgebraic subsets of the complex plane defined by one inequality. For such sets, the real moment problem was shown to be reducible to a complex moment problem in \cite{putinar1992}. In fact, Putinar employed this complexification of the real moment problem in his seminal result \cite{putinar-1993} which implies convergence of the (real) Lasserre hierarchy. Going back to the full complex moment problem, a solution was given in \cite{stochel1998} when the support is the entire complex plane. An operator-valued moment problem is also considered in that work. In the more recent paper \cite{kimsey2013}, a Hermitian-matrix-valued truncated moment problem is investigated in the multivariate setting. 

We now outline our approach. We need a few notations: let $\mathcal{B}(\mathcal{H})$ denote the set of linear bounded operators acting on a Hilbert space $(\mathcal{H}, \langle \cdot, \cdot \rangle)$. For all $T \in \mathcal{B}(\mathcal{H})$, let $T \succcurlyeq 0$ denote $\langle T u , u \rangle \geqslant 0$ for all $u \in \mathcal{H}$. In addition, the commutator of $A,B \in \mathcal{B}(\mathcal{H})$ is defined as $[A,B]:= AB - BA$. Finally, let $A^*$ denote the adjoint of $A \in \mathcal{B}(\mathcal{H})$. Following Halmos \cite{halmos1950}, an operator $T \in \mathcal{B}(\mathcal{H})$ is said to be $\hdots$
\begin{itemize}
\item \textit{normal} if $[T^*,T] = T^*T - TT^* = 0$;
\item \textit{subnormal} if it can be extended to a normal operator $N$ on a larger Hilbert space $\mathcal{K}$;
\item \textit{hyponormal} if $[T^*,T] = T^*T - TT^* \succcurlyeq 0$.
\end{itemize}
The notions of subnormality and hyponormality were introduced by Halmos in 1950 in order to extend the spectral theory of normal operators to a larger class of operators. They have since been used to shed light on the moment problem, as in \cite{athavale1990,vasilescu-2009} and the works cited above. The following implications hold (for explanations, see, e.g., \cite{curto-2010}):
$$\text{normal} ~~~ \Longrightarrow ~~~ \text{subnormal} ~~~ \Longrightarrow ~~~ \text{hyponormal} $$
The gap between subnormality and hyponormality has been the subject of much investigation, such as in \cite{curto1988,mccullough1989}. It was later discovered in \cite{curto1993} that there is in fact a significant gap: even  polynomially hyponormal operators (i.e. such that $p(T)$ is hyponormal for all $p\in \mathbb{C}[z]$) are not necessarily subnormal.
The key ingredient for our proof is that in finite dimension, normality, subnormality, and hyponormality are all equivalent. Indeed, if $\mathcal{H}$ is finite dimensional, then the trace of $[T^*,T]$ is equal to zero. If in addition $[T^*,T]\succcurlyeq 0$, then it must be that $[T^*,T] =0$. We next show how this observation is relevant for a tuple of operators. 

Following the definition of Athavale \cite{athavale1988}, operators $T_1, \hdots, T_n \in \mathcal{B}(\mathcal{H})$ are \textit{jointly hyponormal} if 
\begin{equation}
\label{eq:hypo}
\begin{pmatrix}
[T_1^*,T_1] & [T_2^*,T_1] & \hdots & [T_n^*,T_1] \\
[T_1^*,T_2] & [T_2^*,T_2] & \hdots & [T_n^*,T_2] \\
\vdots & \vdots & & \vdots \\
[T_1^*,T_n] & [T_2^*,T_n] & \hdots & [T_n^*,T_n]
\end{pmatrix} \succcurlyeq 0
\end{equation}
in the sense that for all $u_1, \hdots u_n \in \mathcal{H}$, there holds $\sum\limits_{i,j=1}^n \langle u_i, [T_j^*,T_i] u_j \rangle \geqslant 0$.\\
Thanks to our previous observation, in finite dimension, this is equivalent to\footnote{Make no confusion: this is not the definition of a normal tuple of operators, which is that $[T_i,T_j] = [T_i^*,T_i] = 0$ for all $i,j = 1, \hdots ,n$ (see \cite[p. 1505]{kimsey2016}).}
\begin{equation}
\begin{pmatrix}
[T_1^*,T_1] & [T_2^*,T_1] & \hdots & [T_n^*,T_1] \\
[T_1^*,T_2] & [T_2^*,T_2] & \hdots & [T_n^*,T_2] \\
\vdots & \vdots & & \vdots \\
[T_1^*,T_n] & [T_2^*,T_n] & \hdots & [T_n^*,T_n]
\end{pmatrix} = 0.
\end{equation}
This is itself trivially equivalent to 
\begin{equation}
\begin{pmatrix}
[T_i^*,T_i] & [T_j^*,T_i] \\
[T_i^*,T_j] & [T_j^*,T_j] 
\end{pmatrix} = 0, ~~~ \forall 1 \leqslant i < j \leqslant n,
\end{equation}
and to
\begin{equation}
\begin{pmatrix}
I & T_i^* & T_j^* \\
T_i & T_i^*T_i & T_i^*T_j  \\
T_j & T_i^*T_j & T_j^*T_j
\end{pmatrix} \succcurlyeq 0, ~~~ \forall 1 \leqslant i < j \leqslant n,
\end{equation}
thanks to a Schur complement.

The purpose of the third condition of Theorem \ref{th:hypo} below is to guarantee that joint hyponormality holds for a certain set of operators acting on a finite dimensional space. The purpose of the second condition of Theorem \ref{th:hypo} is to ensure that these operators commute (recall that $d_K := \max \{2,k_1,\hdots,k_m\}$). The operators in question are \textit{shift} operators, which are commonly used when dealing with the moment problem (e.g., \cite[Proposition 8]{putinar2008}). More explanations can be found in the proof.

\begin{theorem}
\label{th:hypo}
\normalfont
\textit{Consider a positive integer} $d$ \textit{and some complex numbers} $(y_{\alpha,\beta})_{|\alpha|,|\beta| \leqslant d}$\textit{. Assume that} $K$ \textit{contains a ball constraint} $|z_1|^2 + \hdots + |z_{n}|^2 \leqslant R^2$ \textit{for some radius} $R > 0$ \textit{and that we are in the multivariate setting} ($n>1$).
\textit{Then there exists a positive} $ \text{rank} M_{d}(y)$\textit{-atomic measure} $\mu$ \textit{supported on} $K$ \textit{such that}
\begin{equation}
 y_{\alpha,\beta} = \int_{\mathbb{C}^n} z^\alpha \bar{z}^\beta d\mu ~, ~~~~ \text{for all} ~~ |\alpha|,|\beta| \leqslant d
\end{equation}
\textit{if and only if there exists an extension} $(y_{\alpha,\beta})_{d < |\alpha|,|\beta| \leqslant d+d_K}$ \textit{such that:}
$$
\boxed{
\begin{array}{l}
\text{1.}~\textit{Positivity of moment and localizing matrices:}\\[.07cm] 
M_{d+d_K}(y) \succcurlyeq 0$ and $M_{d+d_K-k_i}(g_iy) \succcurlyeq 0, ~ i = 1, \hdots, m\\[.3cm]
\text{2.}~\textit{Commutativity of the shifts:}\\[.07cm]
\text{rank} M_{d+d_K}(y) = \text{rank} M_d(y)\\[.3cm]
\text{3.}~\textit{Joint hyponormality of the shifts:}\\[.1cm] 
\begin{pmatrix} 
M_{d}(y) & M_{d}(z_i y) & M_{d}(z_j y) \\ 
M_{d}(\bar{z}_i y) & M_{d}(|z_i|^2 y)  & M_{d}(z_j \bar{z}_i y) \\
M_{d}(\bar{z}_j y) & M_{d}(z_i \bar{z}_j y) & M_{d}(|z_j|^2 y)
\end{pmatrix} 
\succcurlyeq 0 , ~ \forall 1 \leqslant i < j \leqslant n.
\end{array}
}
$$
\end{theorem}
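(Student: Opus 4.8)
The plan is to prove the substantive direction (sufficiency of Conditions 1--3) by manufacturing, out of the extended moment data, a \emph{commuting $n$-tuple of normal operators} on a finite-dimensional Hilbert space, and then reading the atomic measure off its joint spectral decomposition; the converse is a routine verification. First I would build the space: let $\mathcal{H}$ be the quotient of the holomorphic polynomials $\mathrm{span}\{z^\alpha : |\alpha|\leqslant d\}$ by the kernel of $M_d(y)$, with the inner product $\langle [z^\beta],[z^\alpha]\rangle := y_{\alpha,\beta}$ inherited from $M_d(y)$, so that $\dim\mathcal{H} = \mathrm{rank}\,M_d(y) =: S$. Using the flat (rank-preserving) extension of Condition 2, $\mathrm{rank}\,M_{d+d_K}(y)=\mathrm{rank}\,M_d(y)$, I would define shift operators $T_i[p]:=[z_i\,p]$ on $\mathcal{H}$. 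Flatness is exactly what makes each $T_i$ well defined and bounded: it forces the column relations of $M_d(y)$ to persist under multiplication by $z_i$, so that $\mathrm{Null}\,M_d(y)$ is respected and $\mathcal{H}$ is invariant. Since $z_iz_j=z_jz_i$ as polynomials, the $T_i$ then commute automatically, $[T_i,T_j]=0$ -- this is the content of Condition 2 (``commutativity of the shifts''). The reason the extension is pushed to order $d+d_K$ rather than $d+1$ is that the products entering the localizing matrices $M_{d+d_K-k_i}(g_iy)$ and the degree-$2$ terms $M_d(|z_i|^2 y)$ must also be realized as genuine matrix elements on the \emph{same} $S$-dimensional space $\mathcal{H}$; this is where the constant $2$ in $d_K=\max\{2,k_1,\ldots,k_m\}$ enters.

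The key step is to upgrade the $T_i$ from commuting to commuting \emph{and} normal. After a Schur complement against its $M_d(y)$ corner, the $3\times 3$ block of Condition 3 is precisely joint hyponormality of the pair $\{T_i,T_j\}$, i.e.\ the commutator matrix $\bigl(\begin{smallmatrix}[T_i^*,T_i] & [T_j^*,T_i]\\ [T_i^*,T_j] & [T_j^*,T_j]\end{smallmatrix}\bigr)\succcurlyeq 0$; flatness from the first step guarantees that the stated moment block is exactly the Gram matrix of $\{[z^\alpha],[z_iz^\alpha],[z_jz^\alpha]\}$ on $\mathcal{H}$. Here I invoke the finite-dimensional trace argument emphasized before the theorem: $\mathrm{tr}[T^*,T]=0$ for any operator on a finite-dimensional space, so the trace of this positive semidefinite block vanishes, forcing the whole block to be $0$. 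Hence $[T_i^*,T_i]=[T_j^*,T_j]=[T_j^*,T_i]=0$. Because $n>1$, every index $i$ appears in some admissible pair, so each $T_i$ is normal and every $T_i^*$ commutes with every $T_j$; combined with $[T_i,T_j]=0$ this makes $(T_1,\ldots,T_n)$ a commuting normal tuple. The hypothesis $n>1$ is essential precisely because the pairwise condition is vacuous when $n=1$.

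Finally I would extract the measure. A commuting normal tuple on a finite-dimensional space admits a joint spectral decomposition $T_i=\sum_k \lambda_{k,i}P_k$ with orthogonal projections $P_k$ onto the joint eigenspaces and joint eigenvalues $\lambda_k=(\lambda_{k,1},\ldots,\lambda_{k,n})\in\mathbb{C}^n$. Setting $\mu:=\sum_k \|P_k\xi_0\|^2\,\delta_{\lambda_k}$ with $\xi_0:=[1]$, and using that the construction gives $\langle T^\alpha (T^*)^\beta \xi_0,\xi_0\rangle=y_{\alpha,\beta}$, the spectral theorem yields $\int z^\alpha\bar z^\beta\,d\mu=\sum_k \lambda_k^\alpha\bar\lambda_k^\beta\|P_k\xi_0\|^2=y_{\alpha,\beta}$ for all $|\alpha|,|\beta|\leqslant d$, with at most $S=\mathrm{rank}\,M_d(y)$ atoms. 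To place $\mathrm{supp}\,\mu\subseteq K$ I would use Condition 1: flatness identifies each localizing inequality $M_{d+d_K-k_i}(g_iy)\succcurlyeq 0$ with the operator inequality $g_i(T,T^*)=\sum_k g_i(\lambda_k,\bar\lambda_k)P_k\succcurlyeq 0$, which forces $g_i(\lambda_k,\bar\lambda_k)\geqslant 0$ at every atom carrying positive mass; the ball constraint in particular confines the $\lambda_k$ to a compact set. Hence each atom lies in $K$. The converse direction is immediate: for an $S$-atomic $\mu$, the multiplications by $z_i$ on the $S$-dimensional space $L^2(\mu)$ form a commuting normal tuple with vanishing commutator matrix (Condition 3 holds), the moment and localizing matrices of a positive measure are positive semidefinite (Condition 1), and the higher moments of an $S$-atomic measure make $\mathrm{rank}\,M_{d'}(y)$ stabilize at $S$ (Condition 2).

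I expect the main obstacle to be the operator-theoretic dictionary of the first two steps: proving rigorously that flatness makes the shifts $T_i$ well defined and bounded, and -- crucially -- that the moment blocks of Condition 3 and the localizing matrices of Condition 1 coincide \emph{exactly} with the Gram and commutator matrices of the constructed operators on a single $S$-dimensional space. Getting this bookkeeping right, including the degree accounting that justifies extending to order $d+d_K$, is the delicate part. Once it is pinned down, the passage from hyponormality to normality is the short trace argument above, and the spectral extraction of the $\mathrm{rank}\,M_d(y)$-atomic measure supported in $K$ is standard.
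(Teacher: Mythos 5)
Your overall architecture coincides with the paper's proof: a Gram/GNS construction of a finite-dimensional Hilbert space from the extended moment data, shift operators on that space, the finite-dimensional trace argument upgrading joint hyponormality to normality, simultaneous diagonalization of the resulting commuting normal tuple to extract a $\text{rank}\,M_d(y)$-atomic measure, and the localizing matrices to place the atoms in $K$. Your converse direction is also the paper's.

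However, there is a genuine gap at the foundational step: well-definedness of the shifts. You assert that flatness, $\text{rank}\,M_{d+d_K}(y)=\text{rank}\,M_d(y)$, ``forces the column relations of $M_d(y)$ to persist under multiplication by $z_i$.'' For classical (Hankel-structured) moment matrices this propagation is the Curto--Fialkow recursive-generation property, but its proof uses exactly the structure that the pruned complex moment matrix lacks: here $y_{\alpha,\beta}$ is not a function of $\alpha+\beta$, and the paper states explicitly (Section \ref{sec:Truncated moment problem}) that the Curto--Fialkow algebraic argument could not be adapted to this setting. What flat-extension linear algebra gives for free is only that the high-degree columns lie in the span of the low-degree ones; it does \emph{not} give the implication you need, namely $\sum_{\alpha}w_\alpha x_\alpha=0 \Rightarrow \sum_\alpha w_\alpha x_{\alpha+e_k}=0$ for the Gram vectors $x_\alpha$. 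The paper closes this hole with the ball constraint: positivity of its localizing matrix, $M_{d+d_K-1}[(R^2-|z_1|^2-\cdots-|z_n|^2)y]\succcurlyeq 0$, yields
\begin{equation}
\Bigl\| \sum_\alpha w_\alpha x_{\alpha+e_k} \Bigr\| ~\leqslant~ R \, \Bigl\| \sum_\alpha w_\alpha x_\alpha \Bigr\|,
\end{equation}
which is precisely well-definedness (and boundedness) of $T_k$. In your proposal the ball constraint is used only to ``confine the $\lambda_k$ to a compact set''---a step that is not needed, since membership of the atoms in $K$ already follows from the localizing inequalities evaluated at the atoms---so the hypothesis that actually powers the construction is left idle, while the construction itself rests on an unproved (and, in this setting, unavailable) propagation lemma. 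The same gap infects your claim that commutativity of the $T_i$ is ``automatic'' from $z_iz_j=z_jz_i$: the identity $T_iT_jx_\alpha=x_{\alpha+e_i+e_j}=T_jT_ix_\alpha$ requires the shifts to be well defined on spans involving indices of degree up to $d+d_K$, and this is also where the constant $2$ in $d_K=\max\{2,k_1,\ldots,k_m\}$ is really used (not, as you suggest, to realize the localizing blocks as matrix elements). Repair the well-definedness step via the ball constraint and the rest of your outline goes through essentially as the paper's proof does.
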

\begin{proof}
See Appendix \ref{app:hypo}.
\end{proof}
In the univariate setting ($n=1$), the ``joint hyponormality of the shifts'' condition must be replaced by
\begin{equation}
\begin{pmatrix} 
M_{d}(y) & M_{d}(z y) \\ 
M_{d}(\bar{z} y) & M_{d}(|z|^2 y)
\end{pmatrix}
\succcurlyeq 0.
\end{equation}
Theorem \ref{th:hypo} then holds with $d_K := \max \{1,k_1,\hdots,k_m\}$, in contrast to the multivariate setting where $d_K := \max \{2,k_1,\hdots,k_m\}$ (see proof for explanations).

In the next result, we consider two cases where the ball constraint and the ``joint hyponormality of the shifts'' condition can be removed. One case is when the moment data forms a Toeplitz matrix, that is to say when $y_{\alpha,\beta} = \overline{y_{\beta,\alpha}}$ only depends on $\alpha-\beta$. This is revelant when optimizing in the presence of the constraints $|z_k|^2 = 1,~ k = 1, \hdots,n$. The other case is when the moment data forms a Hankel matrix, that is to say when $y_{\alpha,\beta}$ is real and only depends on $\alpha+\beta$. This is relevant for real polynomial optimization, which can be viewed as an instance of complex polynomial optimization  with the constraints $\textbf{i}z_k - \textbf{i}\bar{z}_k = 0, ~ k = 1,\hdots,n$. It corresponds exactly to the moment data generated by the original (real) Lasserre hierarchy.

\begin{theorem}
\label{th:special}
\normalfont
\textit{Consider a positive integer} $d$ \textit{and some complex numbers} $(y_{\alpha,\beta})_{|\alpha|,|\beta| \leqslant d}$\textit{. Assume that} $K$ \textit{contains either the constraints} $|z_k|^2 = 1, ~ k = 1,\hdots,n$ \textit{or the constraints} $\textbf{i}z_k - \textbf{i}\bar{z}_k = 0, ~ k = 1,\hdots,n$.
\textit{Then there exists a positive} $ \text{rank} M_{d}(y)$\textit{-atomic measure} $\mu$ \textit{supported on} $K$ \textit{such that}
\begin{equation}
\label{eq:atomic_repbis}
 y_{\alpha,\beta} = \int_{\mathbb{C}^n} z^\alpha \bar{z}^\beta d\mu ~, ~~~~ \text{for all} ~~ |\alpha|,|\beta| \leqslant d
\end{equation}
\textit{if and only if there exists an extension} $(y_{\alpha,\beta})_{d < |\alpha|,|\beta| \leqslant d+d_K}$ \textit{such that:}\\[.1cm]
$
\begin{array}{l}
\text{1.}~\textit{Positivity of moment and localizing matrices:}\\
M_{d+d_K}(y) \succcurlyeq 0$ and $M_{d+d_K-k_i}(g_iy) \succcurlyeq 0, ~ i = 1, \hdots, m\\[.15cm]
\text{2.}~\textit{Commutativity of the shifts:}\\
\text{rank} M_{d+d_K}(y) = \text{rank} M_d(y).
\end{array}
$
\end{theorem}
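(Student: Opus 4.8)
The plan is to follow the operator-theoretic strategy used for Theorem~\ref{th:hypo}, the point being that in each of the two special cases the ``joint hyponormality of the shifts'' condition and the ball constraint become superfluous because the shift operators form a commuting \emph{normal} tuple automatically.

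First I would dispose of the ``only if'' direction, which is routine. Given a $\text{rank}\,M_d(y)$-atomic measure $\mu$ supported on $K$, I define the extended entries $y_{\alpha,\beta} := \int_{\mathbb{C}^n} z^\alpha \bar{z}^\beta \, d\mu$ for $d < |\alpha|,|\beta| \leqslant d+d_K$. Then $M_{d+d_K}(y)\succcurlyeq 0$ and $M_{d+d_K-k_i}(g_i y)\succcurlyeq 0$ because moment and localizing matrices of a positive measure supported where $g_i \geqslant 0$ are positive semidefinite; and $\text{rank}\,M_{d+d_K}(y) = \text{rank}\,M_d(y)$ because the columns of the moment matrix of an $r$-atomic measure are spanned by the $r$ evaluation functionals at the atoms, so the rank has already stabilized at $r = \text{rank}\,M_d(y)$ by order $d$.

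For the ``if'' direction, conditions 1 and 2 say that $M_{d+d_K}(y)$ is a positive, rank-preserving (flat) extension of $M_d(y)$ with positive localizing matrices. Exactly as in the proof of Theorem~\ref{th:hypo}, flatness lets me define commuting shift operators $T_1,\ldots,T_n$ (multiplication by $z_1,\ldots,z_n$) on the finite-dimensional space $\mathcal{H}$ identified with the column space of $M_d(y)$, so that $\dim\mathcal{H} = \text{rank}\,M_d(y)$; the rank condition is precisely what makes the shifts well-defined and commuting. The crux is then to establish normality of the tuple without appealing to condition 3. In the Hankel case the equality $\textbf{i}z_k - \textbf{i}\bar{z}_k = 0$, through the vanishing of its localizing data, forces $M_d(z_k y) = M_d(\bar{z}_k y)$, hence $T_k = T_k^*$ on $\mathcal{H}$, and a commuting family of self-adjoint operators is a normal tuple. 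In the Toeplitz case the equality $|z_k|^2 = 1$ forces $M_d(|z_k|^2 y) = M_d(y)$, which reads $T_k^* T_k = I$; since $\mathcal{H}$ is finite dimensional an isometry is unitary, so $T_k T_k^* = I$ as well and $T_k^* = T_k^{-1}$ commutes with every $T_j$, again yielding a commuting normal tuple. In either case I invoke the joint spectral theorem for a commuting normal tuple on a finite-dimensional space to obtain common eigenvalues $z^{(1)},\ldots,z^{(S)} \in \mathbb{C}^n$ with $S = \dim\mathcal{H}$ and weights $\lambda_i > 0$; the measure $\mu := \sum_{i} \lambda_i\, \delta_{z^{(i)}}$ reproduces the data, it has exactly $\text{rank}\,M_d(y)$ atoms, and the positivity of the localizing matrices forces $g_i(z^{(j)},\overline{z^{(j)}}) \geqslant 0$, placing each atom in $K$.

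The main obstacle is justifying, in the reverse direction, that the special structure produces a normal tuple \emph{without} the ball constraint and the joint-hyponormality condition that are essential in Theorem~\ref{th:hypo}. In the general complex setting a flat extension only yields a commuting tuple, which need not be normal; the ball and condition 3 are exactly what, via the finite-dimensional equivalence normal $=$ subnormal $=$ hyponormal, upgrade it to a normal (hence spectrally representable) tuple. Here that upgrade is free: the equality constraints make each $T_k$ self-adjoint (Hankel) or unitary (Toeplitz), so boundedness and normality of the individual operators are automatic, and commutativity then delivers the commuting normal tuple. Equivalently, one may reduce the Hankel case directly to the classical real flat-extension theorem of Curto and Fialkow, the moment data being precisely a real moment matrix, and the Toeplitz case to the moment problem on the torus $\{|z_k| = 1\}$, where unitarity of the shifts again renders hyponormality vacuous.
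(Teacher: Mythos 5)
Your proposal follows the same operator-theoretic route as the paper: construct shift operators on the column space of the flat extension, show that the Hankel constraints make them self-adjoint and the Toeplitz constraints make them unitary (a finite-dimensional isometry being unitary, with inverses of commuting invertible operators commuting), and conclude by simultaneous diagonalization of the resulting commuting normal tuple, with the localizing matrices placing the atoms in $K$. Your ``only if'' direction and your normality argument are correct and coincide with the paper's.

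There is, however, a genuine gap where you write that ``the rank condition is precisely what makes the shifts well-defined and commuting.'' Flatness together with positivity does \emph{not} make the shifts well-defined in the pruned complex setting; this is exactly what the ball constraint is used for in the proof of Theorem~\ref{th:hypo}, and the paper explicitly lists existence of the shift operators as one of the two points that must be re-established here by other means. A counterexample to your claim: for $n=1$, $d=1$, take $y_{0,0}=1$, $y_{0,1}=y_{1,0}=y_{1,1}=0$, with extension $y_{0,2}=y_{2,0}=y_{2,2}=1$ and $y_{1,2}=y_{2,1}=0$. Then $M_2(y) \succcurlyeq 0$ and $\text{rank}\, M_2(y) = \text{rank}\, M_1(y) = 1$, yet any Gram factorization $y_{\alpha,\beta}=x_\alpha^* x_\beta$ has $x_1 = 0$ and $x_2 \neq 0$, so the prescription $x_\alpha \mapsto x_{\alpha+1}$ sends the zero vector to a nonzero vector and defines no operator at all. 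This $y$ of course violates the constraint hypotheses of the theorem, and that is precisely the point: well-definedness must be extracted from the vanishing of the localizing matrices of the equality constraints at level $d+d_K-1$, not from flatness. Concretely, in the Toeplitz case $M_{d+d_K-1}[(1-|z_k|^2)y]=0$ gives $\bigl\| \sum_\alpha w_\alpha x_{\alpha+e_k} \bigr\| = \bigl\| \sum_\alpha w_\alpha x_\alpha \bigr\|$, while in the Hankel case $M_{d+d_K-1}[(\mathbf{i}z_k-\mathbf{i}\bar{z}_k)y]=0$ gives $y_{\beta,\alpha+e_k}=y_{\beta+e_k,\alpha}$, whence $\sum_\alpha w_\alpha x_{\alpha+e_k}$ is orthogonal to every $x_\beta$ with $|\beta| \leqslant d+d_K-1$ and hence, lying in their span, is zero. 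These are the very identities you invoke for normality, but you state them only at level $d$ and deploy them only after the operators exist; they must be used first, at the level of the extension, to show that the operators exist. Once that step is inserted, your argument is complete and matches the paper's.
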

\vspace{.1cm}
\begin{proof}
See Appendix \ref{app:special}.
\end{proof}

Our solution to the moment problem in the Toeplitz case is new to the best of our knowledge.
In the univariate case $n=1$ with support equal to the full space $K = \mathbb{C}$, it corresponds to the truncated trigonometric moment problem. A solution to this problem has been given by \cite[P. 211]{iohvidov1982}, \cite[Theorem I.I.12]{akhiezer1962}, and \cite[Theorem 6.12]{curto1991}. It can be stated as follows. A Toeplitz matrix can be represented by a positive Borel measure if and only it is positive semidefinite. In other words, there need not exist a flat extension for there to exist a measure. For some more recent work on the trigonometric moment problem, see \cite{gabardo1999,li2006,zag2015}. See also \cite{bakonyi2011} for its relevance in the context of matrix completions.

The Hankel case in Theorem \ref{th:special} corresponds to the solution of real truncated moment found in \cite[Theorem 3.11]{lasserre-2010} due to Curto and Fialkow~\cite[Theorem 1.1]{curto-2005}. However, their result is stronger because it only requires that $d_K \geqslant 1$, while we require that $d_K \geqslant 2$. The reason why we record this result is to underscore the link between the moment problems arising in the real and complex hierarchies. It also provides a new proof based solely on operator theory, in contrast to the proof of Curto and Fialkow, and the more recent proof of Laurent \cite{laurent2005}. The latter relies partly on algebraic tools, while the former relies only on algebraic tools. Note that the result has been generalized to moment matrices indexed by arbritary monomials in \cite{mourrain2009}.

\section{Multi-ordered Lasserre hierarchy}
\label{sec:Multi-ordered Lasserre hierarchy}

In \cite{mh_sparse_msdp}, a heuristic was proposed to exploit sparsity in the Lasserre hierarchy when applied to the optimal power flow problem. Inspired by that work, we propose a general approach to exploit sparsity in any polynomial optimization problem (in real or complex variables) which preserves global convergence.
The approach associates a different relaxation order to each constraint, in contrast to the Lasserre hierarchy which associates the same relaxation order to all constraints. 

\subsection{Defining a relaxation order at each constraint}
\label{subsec:Defining a relaxation order at each constraint}

In order to define a relaxation order for each constraint, we build on the work of Waki \textit{et al.} \cite{waki-2006}. Those authors propose to use chordal sparsity in the Lasserre hierarchy. They draw on the \textit{correlative sparsity} graph whose vertices are the variables and whose edges signify that two variables appear simultaneously either in the objective or a constraint. The idea of Waki \textit{et al.} is to restrain the variables appearing in the sum-of-squares (\textit{a priori} all variables) to subsets of variables. Indeed, in the sum-of-squares decomposition, i.e.
\begin{equation}
\label{eq:dense}
f - \lambda = \sigma_0 + \sum_{i=1}^m \sigma_i g_i
\end{equation}
one would like to restrain the variables appearing in $\sigma_i$ in function of the variables appearing in the constraint $g_i$. For instance, if the variables appearing in one constraint $g_i$ are $x_1,x_2,x_3,x_4$ (among say $x_1,\hdots,x_{100}$) one could hope to restrain the variables appearing in $\sigma_i$ to $x_1, x_2,x_3,x_4$ (or some slightly larger set). This hope becomes a reality when considering the maximal cliques of a chordal extension of the correlative sparsity graph. Then, to each constraint $g_i$, one can associate a maximal clique containing all the variables of $g_i$ (preferably with the fewest number of variables if several cliques work). Next, one can restrain the variables in the sum-of-squares $\sigma_i$ to that clique. The sum-of-squares $\sigma_0$ can be restricted to a sum of terms, where each term is a sum-of-squares with variables belonging to a clique.
At a given order, the relaxation might be weaker but global convergence is preserved, as was first shown by Lasserre \cite[Theorems 2.28 and 4.7]{lasserre-2010}, and later confirmed in \cite{schweighofer-2007} and~\cite{kuhlmann-2007}. These results easily generalize to complex numbers: the proof is the same as in the real case~\cite[Lemma B.13 and 4.10.2 Proof of Theorem 4.7]{lasserre-2010} once the real vector spaces on which measures are defined are replaced by complex vector spaces. Note that the assumption of a redundant ball constraint per clique must be replaced by a sphere and slack variable per clique. To sum up, if $C_1,\hdots,C_p$ are the cliques, Waki \textit{et al.} propose to restrain \eqref{eq:dense} to
\begin{equation}
\label{eq:sparse}
f - \lambda ~~~ =  ~~~ \sum_{k=1}^p ~~ \left( \sigma_0^k  ~~~~ +  \sum_{\tiny \begin{array}{c} \text{constraints}~ i \\ \text{associated to}~C_k \end{array}} \sigma_i g_i \right)
\end{equation}
where the variables of $\sigma_0^k$ and $\sigma_i$ are restrained to the clique $C_k$.

The approach of Waki \textit{et al.} reduces the computational burden of the Lasserre hierarchy for sparse problems. Concerning the optimal power flow problem, it allows one to solve some hard instances to global optimality with up to 80 variables \cite{ibm_paper} (instead of 20 without exploiting sparsity \cite{pscc2014}). However, by using the correlative sparsity graph discussed above, a lot of the sparsity is lost. We thus propose a finer notion of sparsity that takes advantage of the fact that the objective and constraints are polynomials. To that effect, we define the \textit{monomial sparsity} graph whose vertices are the variables and whose edges signify that two variables appear simultaneously in a monomial of either the objective or a constraint. We can then define a relaxation order $d_i$ for each constraint $g_i$. If we want a constraint $g_i$ to have a high order, i.e. with a sum-of-squares $\sigma_i$ of degree greater than zero, then we add the correlative sparsity induced by $g_i$ to the monomial sparsity graph. We then consider the maximal cliques of a chordal extension of the resulting graph. To each constraint $g_i$ that is of high order, one can associate a maximal clique containing all the variables of $g_i$. The variables in the sum-of-squares $\sigma_i$ can then be restrained to the clique associated to $g_i$ when it is of high order; if not of high order, the sum-of-squares is a nonnegative real number. The sum-of-squares $\sigma_0$ can be restricted to a sum of terms, where each term is a sum-of-squares with variables belonging to a clique. To sum up, we replace \eqref{eq:sparse} by
\begin{equation}
\label{eq:multi}
f - \lambda ~~~ =  ~~~ \sum_{k=1}^p ~~ \left( \sigma_0^k  ~~~~ +  \sum_{\tiny \begin{array}{c} \text{constraints}~ i \\ \text{of high order} \\ \text{associated to}~C_k \end{array}} \sigma_i g_i \right) + \sum_{\tiny \begin{array}{c} \text{constraints}~ i \\ \text{not of high order} \end{array}} \sigma_i g_i
\end{equation}
where the variables of $\sigma_0^k$ are restrained to the clique $C_k$, and same goes for $\sigma_i$ in the case of high order constraints associated to $C_k$; otherwise $\sigma_i$ is a nonnegative real number. The polynomial $\sigma_0^k$ is a sum of squares of polynomials of degree less than or equal to the maximal relaxation order $d_i$ among all high order constraints associated to $C_k$. If no high order constraints are associated to $C_k$, the degree is less than or equal to one. As can be seen in \eqref{eq:multi}, if all the constraints have a high order, we are back to \eqref{eq:sparse}, i.e. the approach of Waki \textit{et al.}\footnote{The monomial sparsity graph augmented with the correlative sparsity induced by each constraint is none other than the correlative sparsity graph, provided that the correlative sparsity induced by the objective is included in the correlative sparsity induced by the constraints.}, but with different orders at each constraint. Global convergence is thus preserved as the minimal order increases to infinity.
\begin{example}
\normalfont
\textit{Consider the following optimization problem:}
\begin{equation}
\inf_{x_1,x_2,x_3,x_4 \in \mathbb{R}} ~~~ x_1x_2 + x_1x_4 ~~~~~ \text{s.t.} ~~~~~ \left\{ \begin{array}{r}
 x_1x_2 + x_1x_3 \geqslant 0 \\
 x_1x_3 + x_1x_4 + x_1x_2 \geqslant 0
\end{array}
\right.
\end{equation}
\textit{It is solely meant to illustrate the above notions; the next example is much more interesting from a numerical perspective. Figure} \ref{fig:sparsity} \textit{illustrates the correlative sparsity used by Waki et al. and the monomial sparsity advocated in this paper. Suppose one wants to impose order 2 at the first constraint (i.e. a high order) and order 1 at the second constraint (i.e. not a high order).  The correlative sparsity induced by the first constraint is the triangle formed by the first three variables. When added to the monomial sparsity pattern, it yields the graph on the left of Figure} \ref{fig:sparsity} \textit{if the edge} (2,3) \textit{is added. The variables in the sum-of-squares} $\sigma_1$ \textit{can then be restrained to} $x_1,x_2,x_3$\textit{, while the sum-of-squares} $\sigma_2$ \textit{is a nonnegative real number. The reason why we must add the correlative sparsity induced by the high order constraint to the monomial sparsity pattern is because in the expression} $\sigma_1(x_1,x_2,x_3) g_1(x_1,x_2,x_3)$\textit{, all the possible products} $x_1x_2,x_1x_3,x_2x_3$ \textit{appear.}  
\begin{figure}[!h]
\centering
\includegraphics[width=8cm]{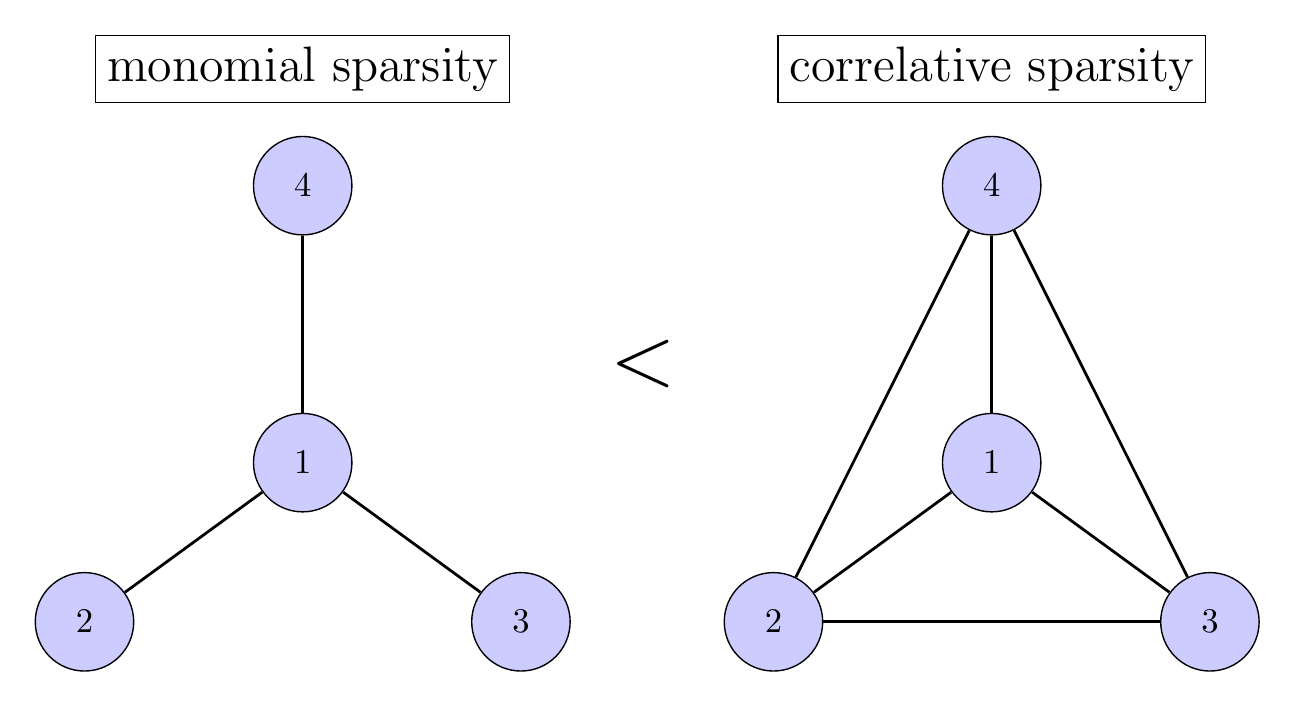}
\label{fig:sparsity}
\caption{Two different notions of sparsity}
\end{figure}
\end{example}

\begin{example}
\normalfont
\textit{In}~\cite[WB5, $Q_5^{\text{min}} = -30.00$ MVAr]{bukhsh2013} \textit{an instance of the optimal power flow problem is proposed. It can be viewed as a complex polynomial optimization problem with five variables} $z_1,z_2,z_3,z_4,z_5 \in \mathbb{C}$\textit{. Let} $\text{mon}(\cdot)$ \textit{denote the monomial sparsity induced either by the objective} $f$ \textit{or by one of the constraints} $g_{1},\hdots, g_{20}$\textit{:}
\begin{align*}
\text{mon}(f) =\; & \{ (1,2),(1,3),(3,5),(4,5) \} \\
\text{mon}(g_{1}) = \text{mon}(g_{2}) =\; & \{(1,2),(1,3)\} \\
\text{mon}(g_{3}) = \text{mon}(g_{4}) =\; & \{ (1,2),(2,3),(2,4) \} \\
\text{mon}(g_{5}) = \text{mon}(g_{6}) =\; & \{ (1,3),(2,3),(3,5) \} \\
\text{mon}(g_{7}) = \text{mon}(g_{8}) =\; & \{ (2,4),(4,5) \}  \\ \addtocounter{equation}{1}\tag{\theequation}\label{eq:5bus}
\text{mon}(g_{9}) = \text{mon}(g_{10}) =\; & \{ (3,5),(4,5) \} \\
\text{mon}(g_{11}) = \hdots = \text{mon}(g_{20}) =\; & \emptyset  
\end{align*}
\noindent \textit{The monomial sparsity is empty when no two distinct variables appear in one monomial, such as in} $g_{11}(z,\bar{z}) = z_1 \bar{z}_1 - 0.90$\textit{. Otherwise, it corresponds to all the couples of distinct variables that appear in one monomial, such as} $(z_1,z_2)$ and $(z_1,z_3)$ \textit{in} $g_1(z,\bar{z}) = 8.12 z_1 \bar{z}_1 \!-\!(2.06\!-\!4.64 \mathbf{i}) z_2\bar{z}_1\!-\!(2.06\!+\!4.64\mathbf{i}) z_1 \bar{z}_2\!-\!(2.00\!-\!4.00\mathbf{i}) z_3 \bar{z}_1\!-\!(2.00\!+\!4.00\mathbf{i}) z_1 \bar{z}_3$.
\textit{The complex hierarchy with} $d_i = 1,\; \forall i \in \left\lbrace 1,2,3,4,5,6,11,12,13,14,15,16 \right\rbrace$, and $d_i = 2,\; \forall i \in \left\lbrace 7,8,9,10,17,18,19,20 \right\rbrace$\textit{, yields a global solution.} (\textit{Second-order constraints are identified using the procedure described in the next section.}) \textit{With this choice of high order constraints, the relevant graph is illustrated in Figure} \ref{fig:5bus graph}\textit{. It is already chordal and its maximal cliques are} \{1,2,3\} \textit{and} \{2,3,4,5\}\textit{. We can associate the latter to all high order constraints since it contains their variables. The globally optimal objective value thus obtained is} 946.6 MW (\textit{in accordance with} \cite{bukhsh-archive}) \textit{with corresponding decision variable} $z = (1.0467 + 0.0000\mathbf{i}, 0.9550 - 0.0578\mathbf{i}, 0.9485 - 0.0533\mathbf{i}, 0.7791 + 0.6011\mathbf{i}, 0.7362 + 0.7487\mathbf{i})^T$.
\begin{figure}[!h]
\centering
\includegraphics[width=4cm]{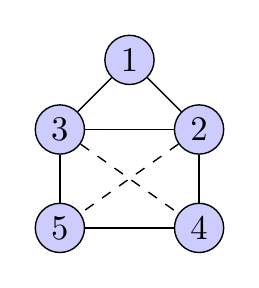}
\label{fig:5bus graph}
\caption{Monomial sparsity graph (solid lines) plus correlative sparsity of high order constraints (dashed lines)}
\end{figure}
%
%

\end{example}

\subsection{Updating the relaxation order at each constraint}
\label{subsec:Updating the relaxation order at each constraint}
Consider a polynomial optimization problem with 10,000 constraints, as encountered in Table \ref{tab:results}. Say that one has computed the first order Lasserre relaxation and that it does not yield a global solution. How does one choose the constraints at which to augment the relaxation order? Even if only 2 constraints require a high order, the combinatorial difficulty is tremendous: there are 49,995,000 combinations to choose from. This section provides one way to choose the high order constraints (there could be other ways of course). We next present the approach when applied to polynomial optimization in real numbers (i.e. $\inf_{x\in \mathbb{R}^n} \sum_{\alpha} f_{\alpha} x^{\alpha} ~\text{s.t.}~ \sum_{\alpha} g_{i,\alpha} x^{\alpha} \geqslant 0, ~ i = 1,\hdots,m$), but it also applies to complex numbers. We begin by computing a solution $y$ to the moment relaxation with the lowest possible order at each constraint. Next, we do the following:\\\\
Until a measure can be extracted from a solution $y$ to the moment relaxation:\\
\begin{enumerate}
\item find a closest measure $\mu$ to $y$ not necessarily supported on $K$:
\begin{equation}
\label{eq:step2}
\underset{\mu}{\arg\min} ~~ \sum_{\alpha} \left( y_{\alpha} - \int_{\mathbb{R}^n} x^{\alpha} d\mu \right)^2
\end{equation}
\item increment $d_i = d_i+1$ at the largest mismatch, that is to say:
\begin{equation}
\underset{1\leqslant i \leqslant m}{\arg\max} ~ \left| \sum_{\alpha} g_{i,\alpha} \left( y_{\alpha} - \int_{\mathbb{R}^n} x^\alpha d\mu \right) \right|
\end{equation}
\item compute a solution $y$ to the moment relaxation of order $(d_1,\hdots,d_m)$.\\
\end{enumerate}

The first step is \textit{a priori} challenging computationally, so we use a proxy for it. For each clique $C_k$, we have a set of pseudo-moments $(y_{\alpha+\beta})_{|\alpha|,|\beta|=1}$ from which we can extract an eigenvector of highest eigenvalue. This eigenvector $u_k$ is defined up to a sign change when dealing with real numbers since $(-u_k)(-u_k)^T = u_k u_k^T$ (respectively up to phase shift when dealing with complex numbers since $(e^{\textbf{i}\theta}u_k)(e^{\textbf{i}\theta}u_k)^* = u_k u_k^*$). In order to synchronize the eigenvectors among the overlapping cliques, we must therefore choose the signs (respectively phase shifts), which can be done approximately via convex optimization. Next, we use a least-squares optimization to find a vector $x \in \mathbb{R}^n$ (respectively $z \in \mathbb{C}^n$) that best matches the signed eigenvectors on each clique (respectively phased eigenvectors). This approximately provides a closest measure to the pseudo-moments, namely the Dirac measure with atom equal to $x$ (respectively $z$) and weight equal to 1.

The second step depends on three parameters: a mismatch tolerance $\epsilon > 0$; the number $h$ of largest mismatches considered at each iteration; and an upper bound $\Delta^{\text{max}}_{\text{min}}$ on the difference between maximum and minimum relaxation orders. (In the experiments of Table \ref{tab:results}, we take $\epsilon = 1$ MVA, $h = 2$, and $\Delta^{\text{max}}_{\text{min}} = 2$.) First, assume that there are constraints with a mismatch greater than $\epsilon$, i.e. $|L_y(g_i) - g_i(x)| > \epsilon$ (respectively $|L_y(g_i) - g_i(z,\bar{z})| > \epsilon$), and whose relaxation orders have not yet been increased. Then increment the order at those that have the $h$ largest mismatches and consider a set of cliques which contain all their variables; increment also the order of any constraint whose variables are included in those cliques. For all other constraints, keep the same relaxation order unless the bound $\Delta^{\text{max}}_{\text{min}}$ is violated, in which case increment all those with the smallest order. Second, assume that all the constraints with a mismatch greater than $\epsilon$ have already been augmented. Then, among those, consider the $h$ largest mismatches, and repeat the above procedure. Third, if all the mismatches are below $\epsilon$, then the point $x$ (respectively $z$) is feasible up to $\epsilon$ and globally optimal.

\section{Block diagonal Lasserre hierarchy} 
\label{sec:Block diagonal Lasserre hierarchy} 

Finally, we exhibit a block diagonal structure of the Lasserre hierarchy in the presence of symmetries. We begin by an illustrative example.

\begin{example}
\normalfont
\textit{In} \cite[WB2, $V_2^{\text{max}} = 1.022 ~\text{p.u.}$]{bukhsh2013}\textit{, an instance of the optimal power flow is proposed. It yields the following complex polynomial optimization problem}
$$
\begin{array}{c}
\inf\limits_{z_1,z_2\in \mathbb{C}} ~~ 8  |z_1-z_2|^2 \\[.5cm]
  \text{s.t.} ~~~
\left\{
\begin{array}{rcl}
0.9025 \leqslant |z_1|^2 \leqslant 1.1025 ~~~~~~   & & \\[.5em]
0.9025 \leqslant |z_2|^2 \leqslant 1.0568 ~~~~~~ & & \\[.5em]
( 2 + 10\textbf{i}) z_1\bar{z}_2 ~+~ \hphantom{+}( 2 - 10\textbf{i})z_2 \bar{z}_1 ~-~ \hphantom{0}4 |z_2|^2 & = & \hphantom{-}350 \\[.5em]
 ( -10 + 2\textbf{i}) z_1 \bar{z}_2 ~+~ ( -10 - 2\textbf{i}) z_2 \bar{z}_1 ~+~ 20 |z_2|^2 & = & -350
\end{array}
\right.
\end{array}
$$
\textit{Notice that if} $(z_1,z_2)$ \textit{is a feasible point, then so is} $(e^{\textbf{i}\theta}z_1,e^{\textbf{i}\theta}z_2)$ \textit{for all} $\theta \in \mathbb{R}$.
\textit{When converted to real numbers} $z_1:=x_1 +x_3 \textbf{i}$ \textit{and} $z_2:= x_2+x_4\textbf{i}$\textit{, it yields}
$$
\begin{array}{c}
\inf\limits_{x_1,x_2,x_3,x_4 \in \mathbb{R}} ~~~ 8  (x_1-x_2)^2 + 8  (x_3-x_4)^2\\[.5cm]
  \text{s.t.} ~~~
\left\{
\begin{array}{rcl}
0.9025 \leqslant x_1^2 + x_3^2 \leqslant 1.1025 ~~~~~~   & & \\[.5em]
0.9025 \leqslant x_2^2 + x_4^2 \leqslant 1.0568 ~~~~~~ & & \\[.5em]
\hphantom{-0}4x_1x_2 + \hphantom{0}4x_3x_4 + 20 x_1x_4 - 20 x_3x_2 - \hphantom{0}4 x_2^2 + \hphantom{0}4 x_4^2 & = & \hphantom{-}350 \\[.5em]
-20 x_1x_2 -20 x_3x_4 + \hphantom{0}4 x_1x_4 - \hphantom{0}4 x_3x_2 + 20 x_2^2 + 20 x_4^2 & = & -350
\end{array}
\right.
\end{array}
$$
\textit{Notice that if} $(x_1,x_2,x_3,x_4)$ \textit{is a feasible point, then so is} $(-x_1,-x_2,-x_3,-x_4)$.

\textit{The above symmetries allow one to cancel many terms in the Lasserre hierarchy at no loss of bound quality. We next illustrate this. The real and complex hierarchies yield the same bounds at the first, second, and third orders} (888.1, 894.3, and 905.7 MW \textit{respectively})\textit{. This is in accordance with} \cite[Table I]{cedric_tps}\textit{. The rank of the real and complex moment matrices guarantee that global convergence is reached at the third order. At that order, one can set to zero the following terms in the moment matrices (the bullets represent potentially non-zero terms):}\\

\textit{Complex moment matrix:}
$$
\tiny
\arraycolsep=.1cm
\begin{array}{rcccccccccc}
&  \begin{turn}{90}$1$\end{turn} & \begin{turn}{90}$\bar{z}_1$\end{turn} & \begin{turn}{90}$\bar{z}_2$\end{turn} & \begin{turn}{90}$\bar{z}_1 \bar{z}_1$\end{turn} & \begin{turn}{90}$\bar{z}_1\bar{z}_2$\end{turn} & \begin{turn}{90}$\bar{z}_2\bar{z}_2$\end{turn} & \begin{turn}{90}$\bar{z}_1\bar{z}_1\bar{z}_1$\end{turn} & \begin{turn}{90}$\bar{z}_1\bar{z}_1\bar{z}_2$\end{turn} & \begin{turn}{90}$\bar{z}_1\bar{z}_2\bar{z}_2$\end{turn} & \begin{turn}{90}$\bar{z}_2\bar{z}_2\bar{z}_2$\end{turn} \\[.25em]
1 & \bullet & 0 & 0 & 0 & 0 & 0 & 0 & 0 & 0 & 0 \\[.25em]
z_1 & 0 & \bullet & \bullet & 0 & 0 & 0 & 0 & 0 & 0 & 0 \\[.25em]
z_2 & 0 & \bullet & \bullet & 0 & 0 & 0 & 0 & 0 & 0 & 0 \\[.25em]
z_1 z_1 & 0 & 0 & 0 & \bullet & \bullet & \bullet & 0 & 0 & 0 & 0 \\[.25em]
z_1 z_2 & 0 & 0 & 0 & \bullet & \bullet & \bullet & 0 & 0 & 0 & 0 \\[.25em]
z_2 z_2 & 0 & 0 & 0 & \bullet & \bullet & \bullet & 0 & 0 & 0 & 0 \\[.25em]
z_1z_1z_1 & 0 & 0 & 0 & 0 & 0 & 0 & \bullet & \bullet & \bullet & \bullet \\[.25em]
z_1z_1z_2 & 0 & 0 & 0 & 0 & 0 & 0 & \bullet & \bullet & \bullet & \bullet \\[.25em]
z_1z_2 z_2 & 0 & 0 & 0 & 0 & 0 & 0 & \bullet & \bullet & \bullet & \bullet \\[.25em]
z_2z_2z_2 & 0 & 0 & 0 & 0 & 0 & 0 & \bullet & \bullet & \bullet & \bullet \\[.25em]
\end{array}
$$

\textit{Real moment matrix:}
$$
\tiny
\arraycolsep=.1cm
\begin{array}{rccccccccccccccccccccccccccccccccccc}
 & \begin{turn}{90}$1$\end{turn} & \begin{turn}{90}$x_1$\end{turn} & \begin{turn}{90}$x_2$\end{turn} & \begin{turn}{90}$x_3$\end{turn} & \begin{turn}{90}$x_4$\end{turn} & \begin{turn}{90}$x_1x_1$\end{turn} & \begin{turn}{90}$x_1x_2$\end{turn} & \begin{turn}{90}$x_1 x_3$\end{turn} & \begin{turn}{90}$x_1 x_4$\end{turn} & \begin{turn}{90}$x_2x_2$\end{turn} & \begin{turn}{90}$x_2x_3$\end{turn} & \begin{turn}{90}$x_2x_4$\end{turn} & \begin{turn}{90}$x_3x_3$\end{turn} & \begin{turn}{90}$x_3x_4$\end{turn} & \begin{turn}{90}$x_4x_4$\end{turn} & \begin{turn}{90}$x_1x_1x_1$\end{turn} & \begin{turn}{90}$x_1x_1x_2$\end{turn} & \begin{turn}{90}$x_1x_1x_3$\end{turn} & \begin{turn}{90}$x_1x_1x_4$\end{turn} &\begin{turn}{90}$x_1x_2x_2$\end{turn} & \begin{turn}{90}$x_1x_2x_3$\end{turn} & \begin{turn}{90}$x_1x_2x_4$\end{turn} & \begin{turn}{90}$x_1x_3x_3$\end{turn} & \begin{turn}{90}$x_1x_3x_4$\end{turn} & \begin{turn}{90}$x_1x_4x_4$\end{turn} & \begin{turn}{90}$x_2x_2x_2$\end{turn} & \begin{turn}{90}$x_2x_2x_3$\end{turn} & \begin{turn}{90}$x_2x_2x_4$\end{turn} & \begin{turn}{90}$x_2x_3x_3$\end{turn} &\begin{turn}{90}$x_2x_3x_4$\end{turn} & \begin{turn}{90}$x_2x_4x_4$\end{turn} & \begin{turn}{90}$x_3x_3x_3$\end{turn} & \begin{turn}{90}$x_3x_3x_4$\end{turn} & \begin{turn}{90}$x_3x_4x_4$\end{turn} & \begin{turn}{90}$x_4x_4x_4$\end{turn} \\[.25em]
1 & \bullet & 0 & 0 & 0 & 0 & \bullet & \bullet & \bullet & \bullet & \bullet & \bullet & \bullet & \bullet & \bullet & \bullet & 0 & 0 & 0 & 0 & 0 & 0 & 0 & 0 & 0 & 0 & 0 & 0 & 0 & 0 & 0 & 0 & 0 & 0 & 0 & 0 \\[.25em]
x_1 & 0 & \bullet & \bullet & \bullet & \bullet & 0 & 0 & 0 & 0 & 0 & 0 & 0 & 0 & 0 & 0 & \bullet & \bullet & \bullet & \bullet & \bullet & \bullet & \bullet & \bullet & \bullet & \bullet & \bullet & \bullet & \bullet & \bullet & \bullet & \bullet & \bullet & \bullet & \bullet & \bullet \\[.25em]
x_2 & 0 & \bullet & \bullet & \bullet & \bullet & 0 & 0 & 0 & 0 & 0 & 0 & 0 & 0 & 0 & 0 & \bullet & \bullet & \bullet & \bullet & \bullet & \bullet & \bullet & \bullet & \bullet & \bullet & \bullet & \bullet & \bullet & \bullet & \bullet & \bullet & \bullet & \bullet & \bullet & \bullet \\[.25em]
x_3 & 0 & \bullet & \bullet & \bullet & \bullet & 0 & 0 & 0 & 0 & 0 & 0 & 0 & 0 & 0 & 0 & \bullet & \bullet & \bullet & \bullet & \bullet & \bullet & \bullet & \bullet & \bullet & \bullet & \bullet & \bullet & \bullet & \bullet & \bullet & \bullet & \bullet & \bullet & \bullet & \bullet \\[.25em]
x_4 & 0 & \bullet & \bullet & \bullet & \bullet & 0 & 0 & 0 & 0 & 0 & 0 & 0 & 0 & 0 & 0 & \bullet & \bullet & \bullet & \bullet & \bullet & \bullet & \bullet & \bullet & \bullet & \bullet & \bullet & \bullet & \bullet & \bullet & \bullet & \bullet & \bullet & \bullet & \bullet & \bullet \\[.25em]
x_1x_1 & \bullet & 0 & 0 & 0 & 0 & \bullet & \bullet & \bullet & \bullet & \bullet & \bullet & \bullet & \bullet & \bullet & \bullet & 0 & 0 & 0 & 0 & 0 & 0 & 0 & 0 & 0 & 0 & 0 & 0 & 0 & 0 & 0 & 0 & 0 & 0 & 0 & 0 \\[.25em]
x_1x_2& \bullet & 0 & 0 & 0 & 0 & \bullet & \bullet & \bullet & \bullet & \bullet & \bullet & \bullet & \bullet & \bullet & \bullet & 0 & 0 & 0 & 0 & 0 & 0 & 0 & 0 & 0 & 0 & 0 & 0 & 0 & 0 & 0 & 0 & 0 & 0 & 0 & 0 \\[.25em]
x_1x_3&  \bullet & 0 & 0 & 0 & 0 & \bullet & \bullet & \bullet & \bullet & \bullet & \bullet & \bullet & \bullet & \bullet & \bullet & 0 & 0 & 0 & 0 & 0 & 0 & 0 & 0 & 0 & 0 & 0 & 0 & 0 & 0 & 0 & 0 & 0 & 0 & 0 & 0 \\[.25em]
x_1x_4 & \bullet & 0 & 0 & 0 & 0 & \bullet & \bullet & \bullet & \bullet & \bullet & \bullet & \bullet & \bullet & \bullet & \bullet & 0 & 0 & 0 & 0 & 0 & 0 & 0 & 0 & 0 & 0 & 0 & 0 & 0 & 0 & 0 & 0 & 0 & 0 & 0 & 0 \\[.25em]
x_2x_2 & \bullet & 0 & 0 & 0 & 0 & \bullet & \bullet & \bullet & \bullet & \bullet & \bullet & \bullet & \bullet & \bullet & \bullet & 0 & 0 & 0 & 0 & 0 & 0 & 0 & 0 & 0 & 0 & 0 & 0 & 0 & 0 & 0 & 0 & 0 & 0 & 0 & 0 \\[.25em]
x_2x_3 & \bullet & 0 & 0 & 0 & 0 & \bullet & \bullet & \bullet & \bullet & \bullet & \bullet & \bullet & \bullet & \bullet & \bullet & 0 & 0 & 0 & 0 & 0 & 0 & 0 & 0 & 0 & 0 & 0 & 0 & 0 & 0 & 0 & 0 & 0 & 0 & 0 & 0 \\[.25em]
x_2x_4 & \bullet & 0 & 0 & 0 & 0 & \bullet & \bullet & \bullet & \bullet & \bullet & \bullet & \bullet & \bullet & \bullet & \bullet & 0 & 0 & 0 & 0 & 0 & 0 & 0 & 0 & 0 & 0 & 0 & 0 & 0 & 0 & 0 & 0 & 0 & 0 & 0 & 0 \\[.25em]
x_3x_3& \bullet & 0 & 0 & 0 & 0 & \bullet & \bullet & \bullet & \bullet & \bullet & \bullet & \bullet & \bullet & \bullet & \bullet & 0 & 0 & 0 & 0 & 0 & 0 & 0 & 0 & 0 & 0 & 0 & 0 & 0 & 0 & 0 & 0 & 0 & 0 & 0 & 0 \\[.25em]
x_3x_4 & \bullet & 0 & 0 & 0 & 0 & \bullet & \bullet & \bullet & \bullet & \bullet & \bullet & \bullet & \bullet & \bullet & \bullet & 0 & 0 & 0 & 0 & 0 & 0 & 0 & 0 & 0 & 0 & 0 & 0 & 0 & 0 & 0 & 0 & 0 & 0 & 0 & 0 \\[.25em]
x_4x_4 & \bullet & 0 & 0 & 0 & 0 & \bullet & \bullet & \bullet & \bullet & \bullet & \bullet & \bullet & \bullet & \bullet & \bullet & 0 & 0 & 0 & 0 & 0 & 0 & 0 & 0 & 0 & 0 & 0 & 0 & 0 & 0 & 0 & 0 & 0 & 0 & 0 & 0 \\[.25em]
x_1x_1x_1  & 0 & \bullet & \bullet & \bullet & \bullet & 0 & 0 & 0 & 0 & 0 & 0 & 0 & 0 & 0 & 0 & \bullet & \bullet & \bullet & \bullet & \bullet & \bullet & \bullet & \bullet & \bullet & \bullet & \bullet & \bullet & \bullet & \bullet & \bullet & \bullet & \bullet & \bullet & \bullet & \bullet \\[.25em]
x_1x_1x_2  & 0 & \bullet & \bullet & \bullet & \bullet & 0 & 0 & 0 & 0 & 0 & 0 & 0 & 0 & 0 & 0 & \bullet & \bullet & \bullet & \bullet & \bullet & \bullet & \bullet & \bullet & \bullet & \bullet & \bullet & \bullet & \bullet & \bullet & \bullet & \bullet & \bullet & \bullet & \bullet & \bullet \\[.25em]
x_1x_1x_3  & 0 & \bullet & \bullet & \bullet & \bullet & 0 & 0 & 0 & 0 & 0 & 0 & 0 & 0 & 0 & 0 & \bullet & \bullet & \bullet & \bullet & \bullet & \bullet & \bullet & \bullet & \bullet & \bullet & \bullet & \bullet & \bullet & \bullet & \bullet & \bullet & \bullet & \bullet & \bullet & \bullet \\[.25em]
x_1x_1x_4  & 0 & \bullet & \bullet & \bullet & \bullet & 0 & 0 & 0 & 0 & 0 & 0 & 0 & 0 & 0 & 0 & \bullet & \bullet & \bullet & \bullet & \bullet & \bullet & \bullet & \bullet & \bullet & \bullet & \bullet & \bullet & \bullet & \bullet & \bullet & \bullet & \bullet & \bullet & \bullet & \bullet \\[.25em]
x_1x_2x_2  & 0 & \bullet & \bullet & \bullet & \bullet & 0 & 0 & 0 & 0 & 0 & 0 & 0 & 0 & 0 & 0 & \bullet & \bullet & \bullet & \bullet & \bullet & \bullet & \bullet & \bullet & \bullet & \bullet & \bullet & \bullet & \bullet & \bullet & \bullet & \bullet & \bullet & \bullet & \bullet & \bullet \\[.25em]
x_1x_2x_3  & 0 & \bullet & \bullet & \bullet & \bullet & 0 & 0 & 0 & 0 & 0 & 0 & 0 & 0 & 0 & 0 & \bullet & \bullet & \bullet & \bullet & \bullet & \bullet & \bullet & \bullet & \bullet & \bullet & \bullet & \bullet & \bullet & \bullet & \bullet & \bullet & \bullet & \bullet & \bullet & \bullet \\[.25em]
x_1x_2x_4  & 0 & \bullet & \bullet & \bullet & \bullet & 0 & 0 & 0 & 0 & 0 & 0 & 0 & 0 & 0 & 0 & \bullet & \bullet & \bullet & \bullet & \bullet & \bullet & \bullet & \bullet & \bullet & \bullet & \bullet & \bullet & \bullet & \bullet & \bullet & \bullet & \bullet & \bullet & \bullet & \bullet \\[.25em]
x_1x_3x_3  & 0 & \bullet & \bullet & \bullet & \bullet & 0 & 0 & 0 & 0 & 0 & 0 & 0 & 0 & 0 & 0 & \bullet & \bullet & \bullet & \bullet & \bullet & \bullet & \bullet & \bullet & \bullet & \bullet & \bullet & \bullet & \bullet & \bullet & \bullet & \bullet & \bullet & \bullet & \bullet & \bullet \\[.25em]
x_1x_3x_4  & 0 & \bullet & \bullet & \bullet & \bullet & 0 & 0 & 0 & 0 & 0 & 0 & 0 & 0 & 0 & 0 & \bullet & \bullet & \bullet & \bullet & \bullet & \bullet & \bullet & \bullet & \bullet & \bullet & \bullet & \bullet & \bullet & \bullet & \bullet & \bullet & \bullet & \bullet & \bullet & \bullet \\[.25em]
x_1x_4x_4  & 0 & \bullet & \bullet & \bullet & \bullet & 0 & 0 & 0 & 0 & 0 & 0 & 0 & 0 & 0 & 0 & \bullet & \bullet & \bullet & \bullet & \bullet & \bullet & \bullet & \bullet & \bullet & \bullet & \bullet & \bullet & \bullet & \bullet & \bullet & \bullet & \bullet & \bullet & \bullet & \bullet \\[.25em]
x_2x_2x_2  & 0 & \bullet & \bullet & \bullet & \bullet & 0 & 0 & 0 & 0 & 0 & 0 & 0 & 0 & 0 & 0 & \bullet & \bullet & \bullet & \bullet & \bullet & \bullet & \bullet & \bullet & \bullet & \bullet & \bullet & \bullet & \bullet & \bullet & \bullet & \bullet & \bullet & \bullet & \bullet & \bullet \\[.25em]
x_2x_2x_3  & 0 & \bullet & \bullet & \bullet & \bullet & 0 & 0 & 0 & 0 & 0 & 0 & 0 & 0 & 0 & 0 & \bullet & \bullet & \bullet & \bullet & \bullet & \bullet & \bullet & \bullet & \bullet & \bullet & \bullet & \bullet & \bullet & \bullet & \bullet & \bullet & \bullet & \bullet & \bullet & \bullet \\[.25em]
x_2x_2x_4  & 0 & \bullet & \bullet & \bullet & \bullet & 0 & 0 & 0 & 0 & 0 & 0 & 0 & 0 & 0 & 0 & \bullet & \bullet & \bullet & \bullet & \bullet & \bullet & \bullet & \bullet & \bullet & \bullet & \bullet & \bullet & \bullet & \bullet & \bullet & \bullet & \bullet & \bullet & \bullet & \bullet \\[.25em]
x_2x_3x_3  & 0 & \bullet & \bullet & \bullet & \bullet & 0 & 0 & 0 & 0 & 0 & 0 & 0 & 0 & 0 & 0 & \bullet & \bullet & \bullet & \bullet & \bullet & \bullet & \bullet & \bullet & \bullet & \bullet & \bullet & \bullet & \bullet & \bullet & \bullet & \bullet & \bullet & \bullet & \bullet & \bullet \\[.25em]
x_2x_3x_4  & 0 & \bullet & \bullet & \bullet & \bullet & 0 & 0 & 0 & 0 & 0 & 0 & 0 & 0 & 0 & 0 & \bullet & \bullet & \bullet & \bullet & \bullet & \bullet & \bullet & \bullet & \bullet & \bullet & \bullet & \bullet & \bullet & \bullet & \bullet & \bullet & \bullet & \bullet & \bullet & \bullet \\[.25em]
x_2x_4x_4  & 0 & \bullet & \bullet & \bullet & \bullet & 0 & 0 & 0 & 0 & 0 & 0 & 0 & 0 & 0 & 0 & \bullet & \bullet & \bullet & \bullet & \bullet & \bullet & \bullet & \bullet & \bullet & \bullet & \bullet & \bullet & \bullet & \bullet & \bullet & \bullet & \bullet & \bullet & \bullet & \bullet \\[.25em]
x_3x_3x_3  & 0 & \bullet & \bullet & \bullet & \bullet & 0 & 0 & 0 & 0 & 0 & 0 & 0 & 0 & 0 & 0 & \bullet & \bullet & \bullet & \bullet & \bullet & \bullet & \bullet & \bullet & \bullet & \bullet & \bullet & \bullet & \bullet & \bullet & \bullet & \bullet & \bullet & \bullet & \bullet & \bullet \\[.25em]
x_3x_3x_4  & 0 & \bullet & \bullet & \bullet & \bullet & 0 & 0 & 0 & 0 & 0 & 0 & 0 & 0 & 0 & 0 & \bullet & \bullet & \bullet & \bullet & \bullet & \bullet & \bullet & \bullet & \bullet & \bullet & \bullet & \bullet & \bullet & \bullet & \bullet & \bullet & \bullet & \bullet & \bullet & \bullet \\[.25em]
x_3x_4x_4  & 0 & \bullet & \bullet & \bullet & \bullet & 0 & 0 & 0 & 0 & 0 & 0 & 0 & 0 & 0 & 0 & \bullet & \bullet & \bullet & \bullet & \bullet & \bullet & \bullet & \bullet & \bullet & \bullet & \bullet & \bullet & \bullet & \bullet & \bullet & \bullet & \bullet & \bullet & \bullet & \bullet \\[.25em]
x_4x_4x_4  & 0 & \bullet & \bullet & \bullet & \bullet & 0 & 0 & 0 & 0 & 0 & 0 & 0 & 0 & 0 & 0 & \bullet & \bullet & \bullet & \bullet & \bullet & \bullet & \bullet & \bullet & \bullet & \bullet & \bullet & \bullet & \bullet & \bullet & \bullet & \bullet & \bullet & \bullet & \bullet & \bullet 
\end{array}
$$
\textit{Note that there are four diagonal blocks in the complex moment matrix, while there are two diagonal blocks in the real moment matrix (after permutation).}
\end{example} 

The above example illustrates a block structure in the Lasserre hierarchy which can be generalized. At order $d$, there are $d+1$ blocks in the complex moment matrix, as opposed to 2 blocks in the real moment matrix (regardless of the relaxation order). This can easily be deduced from the following considerations.

If $\sigma(z,\bar{z}) = \sum_{\alpha,\beta} \sigma_{\alpha,\beta} z^\alpha \bar{z}^\beta$ is a Hermitian sum-of-squares, then its associated \textit{balanced form}, i.e. $ \sum_{|\alpha|=|\beta|} \sigma_{\alpha,\beta} z^\alpha \bar{z}^\beta$, is also a Hermitian sum-of-squares. This follows readily from the \textit{homogeneous decomposition}
\begin{equation}
\sum_{|\alpha|=|\beta|} \sigma_{\alpha,\beta} z^\alpha \bar{z}^\beta ~~~ = ~~~ \sum_{h = 0}^d ~~ \sum_k  \left| \sum_{|\alpha| = h} p_{k,\alpha} z^{\alpha} \right|^2
\end{equation}
where $\sigma(z,\bar{z}) ~=~ \sum_k \left| \sum_{|\alpha| \leqslant d} p_{k,\alpha} z^\alpha \right|^2$. The homogeneous decomposition becomes relevant when the objective $f$ and constraints $g_1,\hdots, g_m$ are themselves balanced forms. Indeed, we then get the following property. If $(\sigma_0,\hdots,\sigma_m)$ is a feasible point of the relaxation of order $d$, that is to say $f - \lambda = \sigma_0 + \sigma_1 g_1 + \hdots + \sigma_m g_m$, then the associated balanced forms also constitute a feasible point. In other words, the non-balanced terms $\sigma_{\alpha,\beta} z^\alpha \bar{z}^\beta, ~ |\alpha| \neq |\beta|,$ can be discarded. The homogeneous decomposition accounts for the $(d+1)$-block diagonal structure of the complex Lasserre hierarchy.

Likewise, if $\sigma(x) = \sum_{\alpha} \sigma_{\alpha} x^{\alpha}$ is a real sum-of-squares, then its associated \textit{even form}, i.e. $\sum_{|\alpha|~\text{even}} \sigma_{\alpha} x^{\alpha}$, is also a real sum-of-squares. This follows readily from the \textit{even/odd decomposition}
\begin{equation}
\sum_{|\alpha|~\text{even}} \sigma_{\alpha} x^{\alpha} ~~~ = ~~~  \sum_k \left( \sum_{|\alpha|~\text{even}} p_{k,\alpha} x^{\alpha} \right)^2 ~+~ \left( \sum_{|\alpha|~\text{odd}} p_{k,\alpha} x^{\alpha} \right)^2
\end{equation}
where $
\sigma(x) ~=~ \sum_k \left( \sum_{\alpha} p_{k,\alpha} x^\alpha \right)^2$. The even/odd decomposition becomes relevant when the objective $f$ and constraints $g_1,\hdots, g_m$ are themselves even forms. If $(\sigma_0,\hdots,\sigma_m)$ is a feasible point of the relaxation of order $d$, then the associated even forms also constitute a feasible point. The even/odd decomposition accounts for the 2-block diagonal structure of the real Lasserre hierarchy.

We next analyse the above results via a dual perspective based on measure theory. We illustrate it with Figure \ref{fig:grasp} which we've generated with MATLAB and Paint.
\begin{figure}[!h]
\centering
\begin{minipage}{.62\textwidth}
  \centering
  \includegraphics[width=1.1\textwidth]{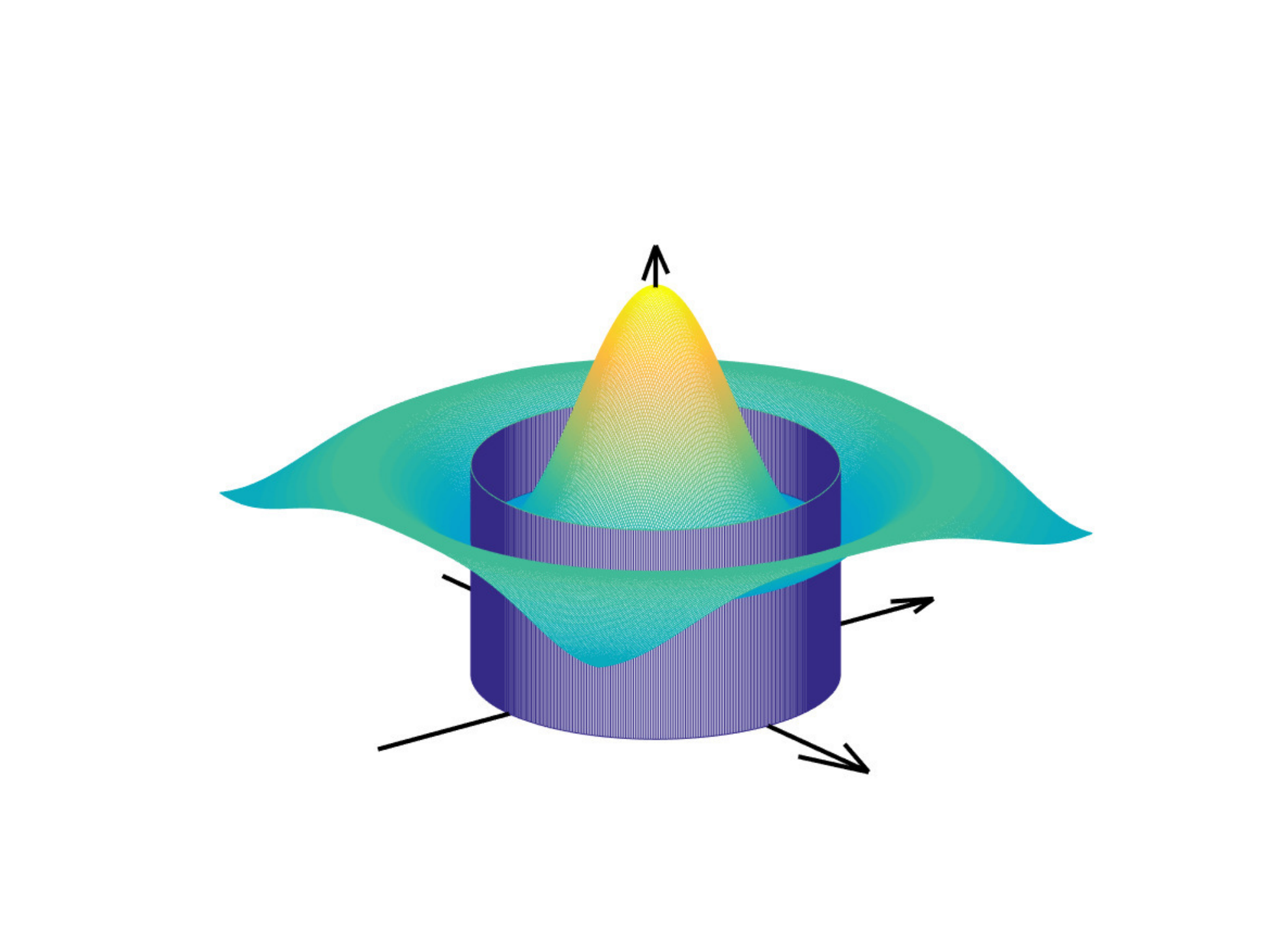}
  \label{fig:test1}
\end{minipage}%
\begin{minipage}{.38\textwidth}
  \centering
  \includegraphics[width=1.1\textwidth]{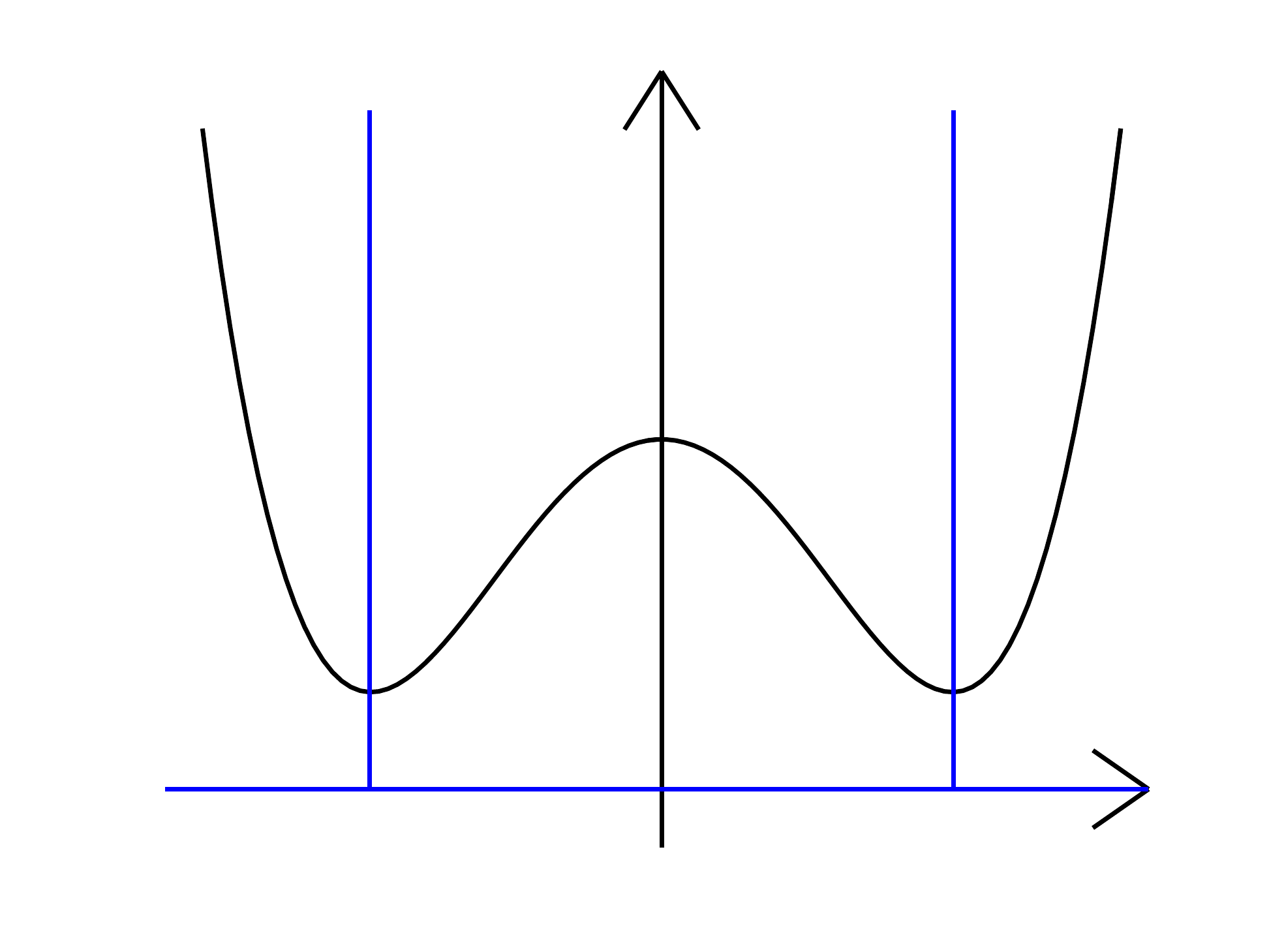}
  \label{fig:test2}
\end{minipage}
\caption{Invariant measure on the complex plane versus on the real line}
\label{fig:grasp}
\end{figure}

On the left, we seek to minimize a function $f(z)$ of one unconstrained complex variable. The two bottom axis correspond to the real and imaginary parts of the variable respectively. The objective function is invariant under the action of the torus $\mathbb{T}$ (i.e. $f(e^{i\theta}z) = f(z)$). Thus, one may seek a measure that is also invariant, instead of looking for a Dirac measure. The cylinder represents an invariant measure $\mu$ minimizing $\int f d\mu$. Such an invariant measure satisfies $\int z^\alpha \bar{z}^\beta d\mu = 0 ~,~ \text{if} ~ |\alpha| \neq |\beta|$. In order to seek such a measure, one may therefore set the corresponding pseudo-moments $y_{\alpha,\beta}$ to zero in the complex moment matrix. 

Seeking an invariant measure under the action of a finite group in the real Lasserre hierarchy was proposed in \cite{riener-2013}. Above, we applied this idea to the complex Lasserre hierarchy for a compact group (the torus). We next apply it to the real Lasserre hierarchy regarding a symmetry not considered in \cite{riener-2013}, namely $\{-1,+1\}$. In particular, that work does not exhibit a block diagonal structure of the Lasserre hierarchy. Note also that a general theory of invariance in sums of squares was developed in~\cite{cimpric-2009}. However, the two symmetries that we consider are not studied.

On the right of Figure \ref{fig:grasp}, we seek to minimize a function $f(x)$ of one unconstrained real variable. It is invariant under the action of the symmetry group $\{-1,+1\}$ (i.e. $f(-x) = f(x)$). Thus one may seek a measure that is also invariant. The vertical lines represent an invariant measure $\mu$ minimizing $\int f d\mu$. Such an invariant measure satisfies $\int x^{\alpha} d\mu = 0 ~,~ \text{if} ~ |\alpha| ~ \text{is odd}$. In order to seek such a measure, one may therefore set the corresponding pseudo-moments $y_{\alpha}$ to zero in the real moment matrix.

\section{Conclusion}
\label{sec:Conclusion}
To summarize, we propose three notions to handle large scale polynomial optimization problems: 1) a \textit{complex Lasserre hierarchy} which generalizes the theory of Lasserre to complex numbers; 2) a \textit{multi-ordered Lasserre hierarchy} to exploit sparsity in real or complex variables by associating a relaxation order to each constraint; 3) a \textit{block diagonal Lasserre hierarchy} to exploit symmetry in real or complex variables. We apply the three notions to the optimal power flow problem in electrical engineering. To the best of our knowledge, the Lasserre hierarchy was previously limited to small scale problems, while we solve a large scale industrial problem with thousands of variables and constraints to global optimality.

\section*{Acknowledgements}
We wish to thank the anonymous reviewers for their precious time and valuable feedback.
Special thanks to Mihai Putinar for the fruitful discussions that helped us to improve this paper. We also wish to thank Jean-Bernard Baillon, Didier Henrion, Jean Bernard Lasserre, Bruno Nazaret, and Markus Schweighofer for their insightful comments. The first author would also like to thank Nicolas Breuillac and Robin Jelin for fruitful discussions during his visit to Berlin.


\appendix

\section{Proof of Theorem \ref{th:hypo}}
\label{app:hypo}

($\Longleftarrow$) The positive semidefinite moment matrix of rank $r:=\text{rank} M_{d+d_K}(y)$ can be factorized in Grammian form as $y_{\alpha,\beta} = x_{\alpha}^* x_{\beta}$, for all $|\alpha|,|\beta|\leqslant d+d_K,$ where $x_\alpha \in \mathbb{C}^r$. This leads us to consider the finite dimensional Hilbert space $\mathbb{C}^r = \text{span} ( x_{\alpha} )_{|\alpha|\leqslant d+d_K} = \text{span} ( x_{\alpha} )_{|\alpha|\leqslant d}$, the last equality being a consequence of $\text{rank} M_{d+d_K}(y) = \text{rank} M_{d}(y)$.\footnote{Indeed, consider $d<|\beta|\leqslant d+d_K$. The column of $M_{d+d_K}(y)$ indexed by $\beta$ is a linear combination of the columns of $M_{d+d_K}(y)$ indexed by $\alpha$ with $|\alpha|\leqslant d$. In other words, there exists some complex numbers $(c_{\alpha})_{|\alpha|\leqslant d}$ such that 
$ y_{\gamma,\beta} = \sum_{|\alpha|\leqslant d} c_{\alpha}  y_{\gamma,\alpha},~ \forall |\gamma| \leqslant d+d_K$. As a result, $x_{\gamma}^* x_{\beta} = \sum_{|\alpha|\leqslant d} c_{\alpha} x_{\gamma}^* x_{\alpha},~ \forall |\gamma| \leqslant d+d_K$. To conclude, $ x_{\beta} -  \sum_{|\alpha|\leqslant d} c_{\alpha} x_{\alpha}  \in  \left( \text{span} ( x_{\alpha} )_{|\alpha|\leqslant d+d_K}\right)^\perp \cap \left(  \text{span} ( x_{\alpha} )_{|\alpha|\leqslant d+d_K} \right)  =  \{ 0 \}$, where $(\cdot)^\perp$ stands for orthogonal.} Since $d_K \geqslant 1$, it is true in particular that $\mathbb{C}^r = \text{span} ( x_{\alpha} )_{|\alpha|\leqslant d+d_K-1}$. On this space, we define the \textit{shift operators} $T_1,\hdots,T_n$ as
\begin{equation}
\begin{array}{rccc}
T_k : & \mathbb{C}^r & \longrightarrow & \mathbb{C}^r \\[.1cm]
         & \sum\limits_{|\alpha| \leqslant d+d_K-1} u_{\alpha} x_{\alpha} & \longmapsto & \sum\limits_{|\alpha| \leqslant d+d_K-1} u_{\alpha} x_{\alpha+e_k}
\end{array}
\end{equation}
where $e_k$ is the row vector of size $n$ that contains only zeros apart from 1 in position $k$. In order to make sure that the shifts are well-defined, we must check that each element of $\mathbb{C}^r$ has a unique image by $T_k$. In other words, given two sets of coefficients $(u_{\alpha})_{|\alpha|\leqslant d+d_K-1}$ and $(v_{\alpha})_{|\alpha|\leqslant d+d_K-1}$, if $\sum_{|\alpha| \leqslant d+d_K-1} u_\alpha x_{\alpha} = \sum_{|\alpha| \leqslant d+d_K-1} v_\alpha x_{\alpha}$, then it must be that $\sum_{|\alpha| \leqslant d+d_K-1} u_\alpha x_{\alpha+e_k} = \sum_{|\alpha| \leqslant d+d_K-1} v_\alpha x_{\alpha+e_k}$. Indeed, this is true because
\begin{equation}
\left\| \sum\limits_{|\alpha|\leqslant d+d_K-1} (u_{\alpha}-v_{\alpha}) x_{\alpha+e_k} \right\| ~~ \leqslant ~~  R ~ \left\| \sum\limits_{|\alpha|\leqslant d+d_K-1} (u_{\alpha}-v_{\alpha}) x_{\alpha} \right\|
\end{equation}
where $\| x \| := \sqrt{x^*x}$ denotes the 2-norm of a vector $x \in \mathbb{C}^r$.
We now explain why the above inequality holds. Given some complex numbers $(w_{\alpha})_{|\alpha|\leqslant d+d_K-1}$, the positivity of the localizing matrix associated to the ball constraint, i.e. $M_{d+d_K-1}[(R^2\!-\!|z_1|^2 \!-\! \hdots\! -\! |z_n|^2)y] \succcurlyeq 0$, implies that
\begin{equation}
\begin{array}{rcl}
\left\| \sum\limits_{|\alpha|\leqslant d+d_K-1} w_{\alpha} x_{\alpha+e_k} \right\|^2 & ~=~ & \sum\limits_{|\alpha|,|\beta|\leqslant d+d_K-1} ~~ \overline{w}_{\alpha} w_{\beta} ~ x_{\alpha+e_k}^*x_{\beta+e_k}  \\[.5cm]
 & ~=~ & \sum\limits_{|\alpha|,|\beta|\leqslant d+d_K-1} ~~ \overline{w}_{\alpha} w_{\beta} ~ y_{\alpha+e_k,\beta+e_k}  \\[.5cm]
 & ~\leqslant~ & R^2 \sum\limits_{|\alpha|,|\beta|\leqslant d+d_K-1} ~~ \overline{w}_{\alpha} w_{\beta} ~ y_{\alpha+e_k,\beta+e_k}  \\[.5cm]
& \leqslant & R^2 \left\| \sum\limits_{|\alpha|\leqslant d+d_K-1} w_{\alpha} x_{\alpha+e_k} \right\|^2.
\end{array}
\end{equation}
We now proceed to show that $T_1,\hdots,T_n,T_1^*,\hdots,T_n^*$ commute pair-wise. When $\text{rank} M_{d+d_K}(y) = \text{rank} M_{d}(y) = 1$, this is trivial since $T_1,\hdots,T_n$ are then a set of complex numbers. Otherwise, we use that $d_K \geqslant 2$ to prove that $T_1,\hdots,T_n$ commute pair-wise. Indeed, for all $|\alpha|\leqslant d \leqslant d+d_K-2$, it holds that  $T_i T_j x_{\alpha} = T_i x_{\alpha+e_j} = x_{\alpha+e_j+e_i} = x_{\alpha+e_i+e_j} = T_j T_i x_{\alpha}$. As a result, given $u \in \mathbb{C}^r$, say with decomposition $u = \sum_{|\alpha| \leqslant d} u_{\alpha} x_{\alpha}$, we have that
\begin{equation} T_i T_j u = T_i T_j \left( \sum_{|\alpha| \leqslant d} u_{\alpha} x_{\alpha} \right) = \sum_{|\alpha| \leqslant d} u_{\alpha} T_i T_j x_{\alpha} = \sum_{|\alpha| \leqslant d} u_{\alpha} T_j T_i x_{\alpha} = T_j T_i u. \end{equation}
We go on to prove the stronger property that $T_1,\hdots,T_n,T_1^*,\hdots,T_n^*$ commute pair-wise. Consider $u,v,w \in \mathbb{C}^r$ admitting the following decompositions
\begin{equation}
\begin{array}{lclcl}
u & = & \sum\limits_{|\alpha|\leqslant d} u_\alpha x_\alpha &~,~& \vec{u} := (u_\alpha)_{|\alpha|\leqslant d} \\[.5cm]
v & = & \sum\limits_{|\alpha|\leqslant d} v_\alpha x_\alpha &~,~& \vec{v} := (v_\alpha)_{|\alpha|\leqslant d} \\[.5cm]
w & = & \sum\limits_{|\alpha|\leqslant d} w_\alpha x_\alpha &~,~& \vec{w} := (w_\alpha)_{|\alpha|\leqslant d} 
\end{array}
\end{equation}
A simple computation (details below) yields that, for all $1\leqslant i < j \leqslant n$,
\begin{equation}
\begin{array}{c}
\begin{pmatrix}
u \\
v \\
w
\end{pmatrix}^*
\begin{pmatrix}
I        & T_i^* & T_j^* \\
T_i    & T_i^* T_i  & T_j^* T_i  \\
T_j    & T_i^* T_j  & T_j^* T_j
\end{pmatrix} 
\begin{pmatrix}
u \\
v \\
w
\end{pmatrix}
~~~ = ~~~ \hdots \\[1cm]
\begin{pmatrix}
\vec{u} \\
\vec{v} \\
\vec{w}
\end{pmatrix}^*
\begin{pmatrix} 
M_{d}(y) & M_{d}(z_i y) & M_{d}(z_j y) \\ 
M_{d}(\bar{z}_i y) & M_{d}(|z_i|^2 y)  & M_{d}(z_j \bar{z}_i y) \\
M_{d}(\bar{z}_j y) & M_{d}(z_i \bar{z}_j y) & M_{d}(|z_j|^2 y)
\end{pmatrix}
\begin{pmatrix}
\vec{u} \\
\vec{v} \\
\vec{w}
\end{pmatrix}
\end{array}
\end{equation}
The ``joint hyponormality of the shifts'' condition then implies that
\begin{equation}
\begin{pmatrix}
I & T_i^* & T_j^* \\
T_i & T_i^*T_i & T_i^*T_j  \\
T_j & T_i^*T_j & T_j^*T_j
\end{pmatrix} \succcurlyeq 0.
\end{equation}
We now dwell on the computational details. We will use the notation $T^{\alpha} := T_1^{\alpha_1} \hdots T_n^{\alpha_n}$ for convenience. For any complex polynomial $g \in \mathbb{C}[z,\bar{z}]$, it holds that $u^* g(T^*,T) v = \vec{u}^* M_{d-d_K}( g y ) \vec{v}$ since
\begin{equation}
\begin{array}{rcl}
u^* g(T^*,T) v & = & u^* \left( \sum\limits_{\gamma,\delta} ~ g_{\gamma,\delta} (T^*)^\gamma  T^\delta \right) v \\[.5cm]
& = & \sum\limits_{\gamma,\delta} ~ g_{\gamma,\delta} ~ (T^\gamma u)^*  T^\delta v \\[.5cm]
& = & \sum\limits_{\alpha,\beta,\gamma,\delta} \overline{u}_\alpha v_\beta ~ g_{\gamma,\delta} ~ (T^\gamma x_\alpha)^*  T^\delta x_\beta \\[.5cm]
& = & \sum\limits_{\alpha,\beta,\gamma,\delta} \overline{u}_\alpha v_\beta ~ g_{\gamma,\delta}  ~ x_{\alpha+\gamma}^*  x_{\beta+\delta} \\[.5cm]
& = & \sum\limits_{\alpha,\beta} \overline{u}_\alpha v_\beta \left( \sum\limits_{\gamma,\delta} g_{\gamma,\delta} ~  y_{\alpha+\gamma,\beta+\delta} \right) \\[.6cm]
& = & \vec{u}^* M_{d}( g y ) \vec{v}.
\end{array}
\end{equation}
Let's pursue the proof: in accordance with Section \ref{sec:Truncated moment problem}, it holds that $T_1,\hdots, T_n$ are jointly hyponormal and that $[T_i,T_j] = T_i^* T_j - T_i T_j^* = 0$. Together with the fact that $T_1,\hdots, T_n$ commute pair-wise, we deduce that $T_1,\hdots,T_n,T_1^*,\hdots,T_n^*$ commute pair-wise. The operators must then be simultaneously diagonalizable. In other words, there exists a unitary matrix $P$ such that $
T_k = PD_kP^*,~ k = 1,\hdots,n,$ where $D_k = \text{diag}(d_{k1},\hdots,d_{kr})$ is a diagonal matrix. For all $|\alpha|,|\beta|\leqslant d+d_K$, we thus have
\begin{equation}
\begin{array}{rcl}
y_{\alpha,\beta} & = & x_\alpha^* x_\beta \\[.5em]
& = & (T^\alpha x_0)^* (T^\beta x_0 )\\[.5em]
& = & x_0^* (T^\alpha)^* T^\beta x_0 \\[.5em]
& = & x_0^* (PD^\alpha P^*)^* PD^\beta P^* x_0 \\[.5em]
& = & x_0^* P \overline{D}^\alpha P^*P D^\beta P^* x_0 \\[.5em]
& = & x_0^* P \overline{D}^\alpha D^\beta P^* x_0 \\[.5em]
& = & x_0^* \left( \sum\limits_{j=1}^r p_j \overline{d}_j^\alpha d_j^\beta p_j^* \right) x_0 \\[1em]
& = & \sum\limits_{j=1}^r x_0^* p_j p_j^* x_0 ~ \overline{d}_j^\alpha d_j^\beta \\[1em]
& = & \sum\limits_{j=1}^r |x_0^* p_j|^2 ~ \overline{d}_j^\alpha d_j^\beta
\end{array}
\end{equation}
where $P =: (p_1 \hdots p_r)$ denote the columns of $P$ and $d_j := (d_{1j},\hdots,d_{nj})$.
As a result, eigenvalues of the shift operators yield the support of a measure, and their eigenvectors yield the weights of a measure. Precisely, the measure $\mu = \sum_{j=1}^r  |x_0^* p_j|^2 ~ \delta_{\overline{d}_j}$
satisfies $y_{\alpha,\beta} = \int_{\mathbb{C}^n} z^{\alpha} \bar{z}^\beta d\mu$ for all $|\alpha|,|\beta| \leqslant d+d_K$. In addition, the atoms are distinct and the weights are positive because $r = \text{rank} M_d(y)$. Finally, the measure is supported on $K$ because 
$$
\begin{array}{rcl}
g_i(\overline{d}_j,d_j) & = & p_j^* p_j ~ \sum\limits_{\gamma,\delta} ~ g_{i,\gamma,\delta} ~ \overline{d}_j^\gamma d_j^\delta ~~~~~~~~~ (p_j^* p_j = 1) \\[.4cm]
& = & \sum\limits_{\gamma,\delta} ~ g_{i,\gamma,\delta} ~ (d_j^\gamma p_j)^* (d_j^\delta p_j)
\end{array}$$
$$
\begin{array}{rcl}
\hphantom{g_i(\overline{d}_j)}\hphantom{g_i(\overline{d}_j,d_j)}\hphantom{g_i(\overline{d}_j,d_j)}& = & \sum\limits_{\gamma,\delta} ~ g_{i,\gamma,\delta} ~ (T^\gamma p_j)^* (T^\delta p_j) ~~~~~~ \left( \text{let} ~ p_j =: \sum\limits_{|\alpha| \leqslant d} p_{\alpha} x_{\alpha} \right)\\[.2cm]
& = & \sum\limits_{|\alpha|,|\beta| \leqslant d} \overline{p}_{\alpha} p_{\beta} \left( \sum\limits_{\gamma,\delta} ~ g_{i,\gamma,\delta} ~  (T^{\gamma}x_{\alpha})^* (T^\delta x_{\beta}) \right)  \\[.4cm]
& = & \sum\limits_{|\alpha|,|\beta| \leqslant d} \overline{p}_{\alpha} p_{\beta} \left( \sum\limits_{\gamma,\delta} ~ g_{i,\gamma,\delta} ~ x_{\alpha+\gamma}^* x_{\beta+\delta} \right) \\[.4cm]
& = & \sum\limits_{|\alpha|,|\beta| \leqslant d} \overline{p}_{\alpha} p_{\beta} \left( \sum\limits_{\gamma,\delta} ~ g_{i,\gamma,\delta} ~ y_{\alpha+\gamma,\beta+\delta} \right) ~ \geqslant ~ 0.
\end{array}
$$
The above inequality is a consequence of $M_{d+d_K-k_i}(g_i y ) \succcurlyeq 0$ and $d \leqslant d+d_K - k_i$.

($\Longrightarrow$) Consider the natural extension given by $y_{\alpha,\beta} = \int_{\mathbb{C}^n} z^\alpha \bar{z}^\beta d \mu$ for all $d < |\alpha|,|\beta| \leqslant d+d_K$. The positivity of the moment matrix follows from the positivity of the weights of the atomic measure. The positivity of the localizing matrices follows from the inclusion of the support of the measure in $K$. The rank is preserved because the rank of the moment matrix cannot exceed the number of atoms. Finally, we have
\begin{equation}
\begin{array}{c}
\begin{pmatrix}
\vec{u} \\
\vec{v} \\
\vec{w}
\end{pmatrix}^*
\begin{pmatrix} 
M_{d}(y) & M_{d}(z_i y) & M_{d}(z_j y) \\ 
M_{d}(\bar{z}_i y) & M_{d}(|z_i|^2 y)  & M_{d}(z_j \bar{z}_i y) \\
M_{d}(\bar{z}_j y) & M_{d}(z_i \bar{z}_j y) & M_{d}(|z_j|^2 y)
\end{pmatrix}
\begin{pmatrix}
\vec{u} \\
\vec{v} \\
\vec{w}
\end{pmatrix} ~~ = ~~ \hdots \\[1cm]
\int_{\mathbb{C}^n} 
\left|u(z) + z_i v(z) + z_j w(z)\right|^2 d \mu
~~ \geqslant ~~ 0
\end{array}
\end{equation}
where $u(z):= \sum\limits_{|\alpha|\leqslant d} u_\alpha z^\alpha$, ~ $v(z) := \sum\limits_{|\alpha|\leqslant d} v_\alpha z^\alpha$, ~ and ~ $w(z) := \sum\limits_{|\alpha|\leqslant d} w_\alpha z^\alpha$.

\section{Proof of Theorem \ref{th:special}}
\label{app:special}

($\Longrightarrow$) This part is identical to the proof of Theorem  \ref{th:hypo}.

($\Longleftarrow$) Just like in the proof of Theorem  \ref{th:hypo}, it holds that $T_1,\hdots,T_n$ are pair-wise commuting. There are two points that need to be addressed: 1) the existence of the shift operators and 2) the pair-wise commutativity of the operators $T_1,\hdots,T_n,T_1^*,\hdots,T_n^*$. To address them, we make use of well-known properties on shift operators (namely unitary and self-adjoint, see \cite[p. 319]{atzmon-1975} for instance).

$\bullet$ $K$ \textit{contains the constraints} $|z_k|^2 = 1, ~ k = 1,\hdots,n$:
The localizing matrix associated to $|z_k|^2 = 1$ is equal to zero, that is $M_{d+d_K-1}[(1-|z_k|^2)y] = 0$. As a result, for all complex numbers $(w_{\alpha})_{|\alpha| \leqslant d+d_K-1}$, it holds that
\begin{equation}
\left\| \sum\limits_{|\alpha| \leqslant d+d_K-1} w_\alpha x_{\alpha+e_k} \right\| ~~=~~ \left\| \sum\limits_{|\alpha| \leqslant d+d_K-1} w_\alpha x_{\alpha} \right\|
\end{equation}
The shifts are thus well-defined.
In addition, for all $|\alpha|,|\beta| \leqslant d$, we have that
\begin{equation} x_{\alpha}^* T_k^*T_k x_{\beta} = (T_k x_{\alpha} )^* (T_k x_{\beta} ) = x_{\alpha+e_k}^* x_{\beta+e_k} = y_{\alpha+e_k,\beta+e_k} = y_{\alpha,\beta} = x_{\alpha}^* x_{\beta}. \end{equation}
As a result, given $u \in \mathbb{C}^r$, say with decomposition $u = \sum_{|\alpha| \leqslant d} u_{\alpha} x_{\alpha}$, we have that
\begin{equation} u^* T_k^*T_k u ~=~ \sum_{|\alpha|,|\beta| \leqslant d} \overline{u}_{\alpha} u_{\beta} x_{\alpha}^* T_k^*T_k x_{\beta} ~=~  \sum_{|\alpha|,|\beta| \leqslant d} \overline{u}_{\alpha} u_{\beta} x_{\alpha}^* x_{\beta} ~ = ~u^* u. \end{equation}
Hence $T_k^*T_k$ is the identity matrix; in other words, the shift operators are unitary. This means that $(T_1,\hdots,T_n,T_1^*,\hdots,T_n^*) = (T_1,\hdots,T_n,T_1^{-1},\hdots,T_n^{-1})$ is a pair-wise commuting tuple of operators. Indeed, if two invertible square matrices $A$ and $B$ commute, so do $A^{-1}$ and $B^{-1}$ (since $A^{-1}B^{-1} AB B^{-1}A^{-1} =  A^{-1}B^{-1} BA B^{-1}A^{-1}$), and so do $A$ and $B^{-1}$ (since $B^{-1} AB B^{-1} =  B^{-1} BA B^{-1}$).

$\bullet$ $K$ \textit{contains the constraints} $\textbf{i}z_k - \textbf{i}\bar{z}_k = 0, ~ k = 1,\hdots,n$:
Consider two sets of complex numbers $(u_{\alpha})_{|\alpha|\leqslant d+d_K-1}$ and $(v_{\alpha})_{|\alpha|\leqslant d+d_K-1}$ and assume that $\sum_{|\alpha| \leqslant d+d_K-1} u_\alpha x_{\alpha} = \sum_{|\alpha| \leqslant d+d_K-1} v_\alpha x_{\alpha}$. We next demonstrate that $\sum_{|\alpha| \leqslant d+d_K-1} u_\alpha x_{\alpha+e_k} = \sum_{|\alpha| \leqslant d+d_K-1} v_\alpha x_{\alpha+e_k}$. To do so, define $w_{\alpha} := u_{\alpha} - v_{\alpha}$ for all $|\alpha|\leqslant d+d_K-1$. For all $|\beta| \leqslant d+d_K-1$, it holds that 
\begin{equation}
\begin{array}{rcl}
x_{\beta}^* \left( \sum\limits_{|\alpha| \leqslant d+d_K-1} w_{\alpha} x_{\alpha+e_k} \right) & = & \sum\limits_{|\alpha| \leqslant d+d_K-1} w_{\alpha} ~ x_{\beta}^* x_{\alpha+e_k} \\[.6cm]
& = & \sum\limits_{|\alpha| \leqslant d+d_K-1} w_{\alpha} ~ y_{\beta,\alpha+e_k} \\[.6cm]
& = & \sum\limits_{|\alpha| \leqslant d+d_K-1} w_{\alpha} ~ y_{\beta+e_k,\alpha} \\[.4cm]
& = & x_{\beta+e_k}^* \left( \sum\limits_{|\alpha| \leqslant d+d_K-1} w_{\alpha} x_{\alpha} \right) ~ = ~ 0
\end{array}
\end{equation}
Since $\mathbb{C}^r = \text{span} (x_{\alpha})_{|\alpha| \leqslant d+d_K-1}$, we conclude that $ \sum_{|\alpha| \leqslant d+d_K-1} w_{\alpha} x_{\alpha+e_k} = 0 $.

Moving on to the latter part of the proof, for all $|\alpha|,|\beta| \leqslant d$, we have that
\begin{equation} x_{\alpha}^* T_k^* x_{\beta} = (T_k x_{\alpha})^*x_{\beta} = x_{\alpha+e_k}^*x_{\beta}  = y_{\alpha+e_k,\beta} = y_{\alpha,\beta+e_k} =  x_{\alpha}^*x_{\beta+e_k} =  x_{\alpha}^* T_k x_{\beta}. \end{equation}
Hence $T_k^* = T_k$; in other words, the shift operators are self-adjoint.
This means that $(T_1,\hdots,T_n,T_1^*,\hdots,T_n^*) = (T_1,\hdots,T_n,T_1,\hdots,T_n)$ is pair-wise commuting.

\bibliography{mybib}{}
\bibliographystyle{siam}

\end{document}